 \newtheorem{theorem}{Theorem}[section]
 \newtheorem{definition}[theorem]{Definition}
 \newtheorem{lemma}[theorem]{Lemma}
 \newtheorem{remark}[theorem]{Remark}
 \newtheorem{cor}[theorem]{Corollary}
 \newtheorem{pro}[theorem]{Proposition}
\title{An Approximation of Forward Self-Similar solutions to the 3D Navier-Stokes system}
\author{F.~Hounkpe\footnote{University of Oxford, Mathematical Institute, OxPDE, Oxford, UK,  email address: \texttt{hounkpe@maths.ox.ac.uk}}, G.~Seregin\footnote{University of Oxford, Mathematical Institute, OxPDE, Oxford, UK and St Petersburg Department of Steklov Mathematical Institute, RAS, Russia, email address: \texttt{seregin@maths.ox.ac.uk}}
}
\begin{document}
\maketitle

\begin{abstract}
    In this paper, we present two constructions of forward self-similar solutions to the $3$D incompressible Navier-Stokes system, as the singular limit of forward self-similar solutions to certain parabolic systems.
\end{abstract}

\setcounter{equation}{0}
\section{Introduction}

In \cite{Jia14}, H. Jia and V. \v{S}ver\'{a}k proved the existence of the so-called forward self-similar solution  $u\in C^{\infty}(\mathbb{R}^3\times ]0,\infty[)$ to the incompressible Navier-Stokes system
\begin{equation}\label{NS}
        \partial_t u + u \cdot \nabla u - \Delta u+ \nabla p = 0, \qquad
        {\rm div}\, u = 0
\end{equation} in $Q_+=\mathbb R^3\times ]0,\infty[$, satisfying the initial conditon
\begin{equation} \label{inidata}
    u|_{t=0} = u_0
\end{equation}
in $\mathbb R^3$, where $u_0$ is an arbitrary large $(-1)$-homogeneous divergence free vector valued field. By the definition, velocity $u$ is invariant with respect to the Navier-Stokes scaling, i.e., $\lambda u(\lambda x,\lambda^2 t) = u(x,t)$ for all $\lambda>0$. Such a problem can be reduced to the existence of a solution to the following stationary system
\begin{equation}\label{E1.8}
    -\Delta U + U\cdot \nabla U -\frac{x}{2}\cdot \nabla U - \frac{U}{2} + \nabla P = 0,\qquad
    {\rm div}\, U = 0
\end{equation} in $\mathbb R^3$
under the boundary condition at infinity
\begin{equation}\label{E1.7}
    |U(x)-u_0(x)| = o(|x|^{-1})\quad\mbox{as }|x|\to \infty.
\end{equation}
Then
$$
u(x,t) = \frac{1}{\sqrt{t}}U\left(\frac{x}{\sqrt{t}}\right)
$$
is a local energy weak solution to (\ref{NS}) and (\ref{inidata}) in the sense of Lemarie-Rieusset, see \cite{LemRie2002}, \cite{LemRie2016},   and also \cite{KiSer2007}.
An important idea of H. Jia and V. \v{S}ver\'{a}k is to use the possible instantaneous non-uniqueness of forward self-similar solutions in order to construct  different  weak Leray-Hopf solutions with the same $L_2$-initial data. In fact, they state a sufficient condition on the spectrum of the linearised problem ensuring non-uniqueness of forward self-similar solutions. So far, it is  an open problem whether the above condition holds for a certain $(-1)$-homogeneous initial data. However,
numerical experiments, see \cite{Gui17},  demonstrates that there are initial data for which the above spectrum condition is satisfied.

One of the main aim of the paper is to understand how known approximation schemes for solving the Cauchy problem for the Navier-Stokes system work in the case of forward self-similar solutions. There is a hope that it might help to get a better understanding of the non-uniqueness phenomenon. To this end, we replace the Navier-Stokes system (\ref{NS}) with the following ones:
\begin{equation}\label{Toy-Mod1}
    \partial_t u^\kappa - \Delta u^\kappa -\kappa \nabla {\rm div}\, u^\kappa+ u^\kappa\cdot \nabla +\frac{u^\kappa}{2} {\rm div}\,  u^\kappa= 0
\end{equation}
or
\begin{equation}\label{Toy-Mod2}
    \partial_t u^\kappa - \Delta u^\kappa-\kappa \nabla {\rm div}\,  u^\kappa +\left(u^\kappa \otimes u^\kappa +\frac{|u^\kappa|^2}{2} I_3\right) = 0
   \end{equation}
   and add up the Cauchy data
\begin{equation} \label{inidatakappa}
    u^\kappa|_{t=0} = u_0.\end{equation}
    Here, $\kappa\geq 0$ is a parameter and $I_3$ denotes the identity matrix.

For the profile $U^\kappa$, one has  then the following elliptic systems:
\begin{equation}\label{E1.5}
    -\Delta U^\kappa - \kappa\nabla {\rm div}\, U^\kappa + U^\kappa\cdot\nabla U^\kappa + \frac{U^\kappa}{2}{\rm div}\, U^\kappa - \frac{x}{2}\cdot\nabla U^\kappa - \frac{U^\kappa}{2} = 0\quad\mbox{in }\mathbb{R}^3,
\end{equation}
if we are solving \eqref{Toy-Mod1} or
\begin{equation}\label{E1.6}
    -\Delta U^\kappa - \kappa\nabla {\rm div}\, U^\kappa + {\rm div}\left( U^\kappa\otimes U^\kappa + \frac{|U^\kappa|^2}{2}I_3\right) - \frac{x}{2}\cdot\nabla U^\kappa - \frac{U^\kappa}{2} = 0\quad\mbox{in }\mathbb{R}^3,
\end{equation}
if we are solving \eqref{Toy-Mod2} instead. We require the following asymptotic on $U^\kappa$
\begin{equation}\label{E1.71}
    |U^\kappa(x)-u_0(x)| = o(|x|^{-1})\quad\mbox{as }|x|\to \infty.
\end{equation}
Then the corresponding solution to the Cauchy problems has the form
$$
u^\kappa(x,t) = \frac{1}{\sqrt{t}}U^\kappa\left(\frac{x}{\sqrt{t}}\right)
$$
for $x\in \mathbb R^3$ and $t>0$.

Now, we state the main results of the paper. Given $\kappa$, one can prove the existence of a forward self-similar solution to the Cauchy problem (\ref{Toy-Mod1}) and (\ref{inidatakappa}) or  (\ref{Toy-Mod2}) and (\ref{inidatakappa}), using the same method as in \cite{Jia14}. Our novelty here is that we use the notion of  global weak $L^{3,\infty}$-solution ($L^{3,\infty}$ denotes a weak Lebesgue space) which is slightly stronger than the notion of  weak Lemarie-Rieusset solutions. Global weak $L^{3,\infty}$-solutions have been introduced in \cite{Bark18} in the case of the Navier-Stokes equations. In order to present the corresponding definitions, we need  the semigroup $S_\kappa(t)$ associated to the Lam\'e system, i.e., $ v^\kappa(x,t)=S_\kappa(t)u_0(x)$, where
$ v^\kappa$ is a solution to the Cauchy problem:
\begin{equation}\label{lame}
	\partial_t v^\kappa-\Delta  v^\kappa-\kappa \nabla {\rm div}\, v^\kappa=0\end{equation}
in $Q_+$ and
\begin{equation}\label{semi-initial}
	v^\kappa(\cdot,0)=u_0(\cdot)\end{equation}
in $\mathbb R^3$.

For example, in  the case of
(\ref{Toy-Mod1}), the  definition of a global weak $L^{3,\infty}$-solution is as follows.
\begin{definition}
\label{globweak}
    We say that $u^\kappa$ is a global weak $L^{3,\infty}-$solution to
    the Cauchy problem (\ref{Toy-Mod1}) and (\ref{inidatakappa}) in $Q_+$ 
     if the function
    \begin{equation}
        w^\kappa = u^\kappa -  v^\kappa
    \end{equation}
    has the following properties:
    \begin{equation}\label{functionspaces}
    	\sup\limits_{0<t<T}\int\limits_{\mathbb R^3}|w^\kappa(x,t)|^2dx+\int\limits^T_0\int\limits_{\mathbb R^3}|\nabla w^\kappa|^2dxdt\leq C(T)<\infty
 \end{equation} for all $T>0$;

    \begin{equation}
        \partial_t w^\kappa - \Delta w^\kappa - \kappa \nabla {\rm div}\, w^\kappa + u^\kappa\cdot \nabla u^\kappa + \frac{u^\kappa}{2}{\rm div}\, u^\kappa = 0
    \end{equation}
    in the sense of distributions;

    the function
    \begin{equation}
        t \mapsto \int_{\mathbb{R}^3}w^\kappa(x,t)\cdot w(x) dx,
    \end{equation}
    is  continuous  at each $t\geq0$ for all $w\in L_2(\mathbb{R}^3)$;
    \begin{equation}
        \|w^\kappa(\cdot,t) \|_{L_2(\mathbb{R}^3)} \to 0\quad\mbox{as }t\to 0^+;
    \end{equation}

    for a.a. $t\in ]0,T[$, the  local energy inequality
    \begin{multline}
        \frac{1}{2}\int_0^t|u^\kappa(x,t)|^2 \phi(x,t)dx  + \int_0^t\int_{\mathbb{R}^3}\left(|\nabla u^\kappa|^2 + \kappa({\rm div}\, u^\kappa)^2\right)\phi(x,t)dx dt \\ \leq \int_0^t\int_{\mathbb{R}^3}\frac{|u^\kappa|^2}{2}(\partial_t\phi + \Delta \phi)dx dt + \int_0^t\int_{\mathbb{R}^3}(\frac{|u^\kappa|^2}{2} - \kappa{\rm div}\, u^\kappa)u^\kappa\cdot \nabla \phi dx dt.
    \end{multline}
is valid for each non-negative test function $\phi\in C^\infty_0(Q_+)$.
\end{definition}

 \begin{remark}\label{Toy2}
We have also an analogous definition for the Cauchy problem (\ref{Toy-Mod2}) and (\ref{inidatakappa}).
\end{remark}
\begin{remark} \label{turbulent} It is easy to show that $w^\kappa$ is a turbulent solution in the Leray sense, see \cite{Leray1934}. In other words, for all $t\in [0,T]$,
\begin{equation}\label{turb1}
\frac 12\int\limits_{\mathbb R^3}|w^\kappa(x,t)|^2dx+\int\limits_s^t	\int\limits_{\mathbb R^3}(|\nabla w^\kappa|^2+\kappa|{\rm div}\,w^k|^2)dxdt'\leq
\end{equation}
$$\leq\frac 12\int\limits_{\mathbb R^3}|w^\kappa(x,s)|^2dx	+
\int\limits_s^t	\int\limits_{\mathbb R^3}(v^\kappa\otimes w^\kappa+v^\kappa\otimes v^\kappa):\nabla w^kdxdt'+
$$$$+\int\limits_s^t	\int\limits_{\mathbb R^3}\frac 12v^\kappa\cdot w^\kappa{\rm div}\,w^\kappa dxdt'
$$
for a.a. $s\in [0,T]$, including $s=0$.
\end{remark}
Now, we start to formulate  our results with the following statement.
\begin{theorem}\label{existenceglobalweaksol}
Assume that $u_0\in L^{3,\infty}(\mathbb{R}^3)$. There exists at least one global weak $L^{3,\infty}-$ solution $u^\kappa$ to the Cauchy problem \eqref{Toy-Mod1} and \eqref{inidatakappa}.
Moreover,  the  global energy estimate
\begin{multline}\label{E1.21}
    \|w^{\kappa}(\cdot,t)\|^2_{L_2(\mathbb{R}^3)} + \int_0^t\int_{\mathbb{R}^3}|\nabla w^{\kappa}(x,s)|^2dxds\\ + \kappa\int_0^t \int_{\mathbb{R}^3}|{\rm div}\, w^{\kappa}|^2 dx ds \leq c_0t^{\frac{1}{2}}\left(\|u_0\|^2_{L^{3,\infty}(\mathbb{R}^3)} + \|u_0\|^4_{L^{3,\infty}(\mathbb{R}^3)} \right),
\end{multline}
holds for all $\kappa, t>0$ and for  an absolute positive constant $c_0$.
\end{theorem}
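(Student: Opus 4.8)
The plan is to construct the solution by a standard compactness/fixed-point scheme applied to the parabolic system \eqref{Toy-Mod1}, treating $w^\kappa=u^\kappa-v^\kappa$ as the unknown, and then to extract the self-similar structure and the scaling-invariant bound \eqref{E1.21}. First I would record the linear estimates for the Lam\'e semigroup $S_\kappa(t)$: since the symbol of $-\Delta-\kappa\nabla\,{\rm div}$ is comparable to $|\xi|^2$ uniformly in $\kappa\ge 0$ on the divergence-free part and $(1+\kappa)|\xi|^2$ on the gradient part, $S_\kappa(t)$ satisfies the same $L^p\to L^q$ and $L^{3,\infty}\to L^q$ decay bounds as the heat semigroup, with constants independent of $\kappa$ (this is where it matters that $u_0\in L^{3,\infty}$, so that $\|v^\kappa(\cdot,t)\|_{L^q}\lesssim t^{-\frac12(1-3/q)}\|u_0\|_{L^{3,\infty}}$ for $3<q\le\infty$, and $v^\kappa$ is itself $(-1)$-homogeneous in the parabolic sense). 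Next I would set up the integral equation $w^\kappa=-\int_0^t S_\kappa(t-s)\big(u^\kappa\cdot\nabla u^\kappa+\tfrac12 u^\kappa\,{\rm div}\,u^\kappa\big)\,ds$ with $u^\kappa=v^\kappa+w^\kappa$, expand the quadratic term into the three pieces $v\!\cdot\!\nabla v$, $v\!\cdot\!\nabla w+w\!\cdot\!\nabla v$, $w\!\cdot\!\nabla w$ (plus the analogous ${\rm div}$ terms), and run a mollified Galerkin or Leray-type regularization to produce approximate solutions $w^{\kappa,\varepsilon}$ living in the energy class \eqref{functionspaces}.

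The heart of the argument is the a priori energy estimate. Testing the equation for $w^\kappa$ against $w^\kappa$ itself gives, after integration by parts and using ${\rm div}\,v^\kappa$-terms carefully,
\[
\tfrac12\frac{d}{dt}\|w^\kappa\|_2^2+\|\nabla w^\kappa\|_2^2+\kappa\|{\rm div}\,w^\kappa\|_2^2
= \int (v^\kappa\otimes w^\kappa+v^\kappa\otimes v^\kappa):\nabla w^\kappa\,dx+\tfrac12\int v^\kappa\cdot w^\kappa\,{\rm div}\,w^\kappa\,dx,
\]
exactly the differential form behind Remark~\ref{turbulent}. One bounds the right-hand side by H\"older and the interpolation/Sobolev inequality $\|w^\kappa\|_{L^6}\lesssim\|\nabla w^\kappa\|_{L^2}$: the term $\int v\otimes v:\nabla w$ is controlled by $\|v^\kappa\|_{L^{3}_{\rm weak}\text{-type}}$-norms, namely $\|v^\kappa(\cdot,s)\|_{L^3}\|v^\kappa(\cdot,s)\|_{L^?}\|\nabla w^\kappa\|_{L^2}$, and after absorbing $\tfrac12\|\nabla w^\kappa\|_2^2$ one is left with a Gronwall-type inequality whose forcing is $\big(\text{quantity}\big)\lesssim\|v^\kappa(s)\|^4$ integrated in time. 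Using the self-similar decay $\|v^\kappa(\cdot,s)\|_{L^q}\lesssim s^{-\frac12(1-3/q)}\|u_0\|_{L^{3,\infty}}$ and choosing the exponents so that the time-singularities are integrable, the time integral of the quartic term over $(0,t)$ produces precisely a factor $t^{1/2}$, and the quadratic-in-$v$ term (coefficient of $\|w^\kappa\|_2^2$ in Gronwall) integrates to something bounded, giving \eqref{E1.21} with a constant $c_0$ independent of $\kappa$. The bound \eqref{E1.21} also yields $\|w^\kappa(\cdot,t)\|_{L^2}\to 0$ as $t\to 0^+$.

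Finally I would pass to the limit $\varepsilon\to 0$: the uniform bounds \eqref{E1.21} plus the equation give uniform control of $\partial_t w^{\kappa,\varepsilon}$ in a negative-order space, so by Aubin–Lions $w^{\kappa,\varepsilon}\to w^\kappa$ strongly in $L^2_{\rm loc}(Q_+)$, enough to pass to the limit in the quadratic nonlinearity and in the local energy inequality (the latter surviving by weak lower semicontinuity of the dissipation terms, including $\kappa({\rm div}\,u^\kappa)^2$). The weak-continuity of $t\mapsto\int w^\kappa\cdot w$ follows from the energy bound and the equation in the standard way. Because $u_0$ is $(-1)$-homogeneous and the whole construction is scaling-covariant, one either builds the approximations to be self-similar from the start (working directly with the profile system \eqref{E1.5}) or averages over the scaling group to symmetrize, obtaining $u^\kappa(x,t)=t^{-1/2}U^\kappa(x/\sqrt t)$.

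The main obstacle I anticipate is getting the energy estimate with a constant genuinely independent of $\kappa$: the extra terms $\kappa\nabla\,{\rm div}$ and $\tfrac12 u^\kappa\,{\rm div}\,u^\kappa$ are not divergence-free-friendly, so one must check that the "bad" term $\tfrac12\int v^\kappa\cdot w^\kappa\,{\rm div}\,w^\kappa$ can be absorbed using only the $\|\nabla w^\kappa\|_{L^2}^2$ dissipation (not the $\kappa$-dissipation, which degenerates as $\kappa\to0$) — this forces the exponents in the H\"older splitting and is the delicate point that makes the $t^{1/2}$ scaling and $\kappa$-uniformity come out correctly.
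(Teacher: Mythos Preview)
Your energy argument has a genuine gap, and it is not where you think it is. The obstacle is not $\kappa$-uniformity (the term $\kappa\|{\rm div}\,w^\kappa\|_2^2$ sits on the left with a good sign and can simply be dropped); the obstacle is closing the estimate for \emph{large} $\|u_0\|_{L^{3,\infty}}$. Look at the cross term $\int (v^\kappa\otimes w^\kappa):\nabla w^\kappa$. If you estimate it via Lorentz duality, $\|v^\kappa\|_{L^{3,\infty}}\|w^\kappa\|_{L^{6,2}}\|\nabla w^\kappa\|_{L^2}\le c\|u_0\|_{L^{3,\infty}}\|\nabla w^\kappa\|_{L^2}^2$, you can absorb it only when $\|u_0\|_{L^{3,\infty}}$ is small. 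If instead you use $\|v^\kappa\|_{L^6}\|w^\kappa\|_{L^3}\|\nabla w^\kappa\|_{L^2}$ and interpolate, the Gronwall coefficient becomes $\|v^\kappa(\cdot,s)\|_{L^6}^4\sim s^{-1}\|u_0\|_{L^{3,\infty}}^4$, which is not integrable at $s=0$. So the claim that ``the quadratic-in-$v$ term integrates to something bounded'' is exactly the step that fails; the forcing term $\int_0^t\|v^\kappa(s)\|_{L^4}^4\,ds\sim t^{1/2}$ is fine, but the Gronwall multiplier is not.

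The paper's cure is a splitting trick (their Lemma~\ref{LA.1}): write $u_0=\bar u_0^N+\hat u_0^N$ with $|\bar u_0^N|\le N$, set $\bar v^N=S_\kappa(t)\bar u_0^N$, $\hat v^N=S_\kappa(t)\hat u_0^N$, and run the energy estimate for $w^N:=w^\kappa+\hat v^N=u^\kappa-\bar v^N$ instead of for $w^\kappa$. Now $\|\bar v^N(\cdot,s)\|_{L^6}^4\le cN^2\|u_0\|_{L^{3,\infty}}^2$ is constant in $s$, so Gronwall applies and gives a bound containing the factor $\exp(cN^2t\|u_0\|_{L^{3,\infty}}^2)$, together with an additive $N^{-1}$ term coming from $\|\hat u_0^N\|_{L^2}^2\le cN^{-1}\|u_0\|_{L^{3,\infty}}^3$ and an $Nt$ term from the quartic $\bar v^N$ forcing. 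Choosing $N\sim (t\|u_0\|_{L^{3,\infty}}^2)^{-1/2}$ balances these and produces exactly $c_0 t^{1/2}(\|u_0\|_{L^{3,\infty}}^2+\|u_0\|_{L^{3,\infty}}^4)$. This optimisation in $N$ is the missing idea in your outline. (A minor point: Theorem~\ref{existenceglobalweaksol} does not assume $u_0$ is $(-1)$-homogeneous, so the self-similarity remarks at the end of your proposal belong to Theorem~\ref{existenceself-similar}, not here.)
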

The proof of the above theorem is based on ideas developed in \cite{Bark18} and is given in Appendix I.

As to a forward self-similar solutions, since
 it is unknown whether the solution  constructed in the above theorem is unique, there is no guarantee that a $(-1)$-homogeneous initial data produces a scale invariant solution. However, we are able to prove the following result.

\begin{theorem}\label{existenceself-similar}
Let $u_0\in C^{\infty}(\mathbb{R}^3\setminus \{0\})$ such that $\lambda u_0(\lambda x) = u_0(x)$ for all $\lambda >0$. Then, given $\kappa\geq 0$, there exists a smooth solution $U^\kappa$  to the boundary value problem (\ref{E1.5}) and (\ref{E1.71}) satisfying the decay estimates:
$$
|\partial^{\alpha}\left( U^\kappa(x) - V^\kappa(x) \right)| \leq \frac{C(\alpha,\kappa,u_0)}{(1 + |x|)^{3+|\alpha|}},\qquad V^\kappa(x):=v^\kappa(x,1),
$$
for all $\alpha\in \mathbb{N}^3$ (with $|\alpha| = \alpha_1 + \alpha_2 + \alpha_3$) and for all $x\in \mathbb R^3$.

Moreover, $u^\kappa(x,t)=\frac 1{\sqrt t}U^\kappa(\frac x{\sqrt t})$ is a global weak $L^{3,\infty}-$ solution to system \eqref{Toy-Mod1} with initial data $u_0$, see (\ref{inidatakappa}). 
\end{theorem}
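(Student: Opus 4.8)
The plan is to construct $U^\kappa$ as a fixed point of the integral operator associated with the elliptic system \eqref{E1.5}, linearized around the Lamé profile $V^\kappa(x)=v^\kappa(x,1)=S_\kappa(1)u_0$. Writing $U^\kappa=V^\kappa+W^\kappa$, one subtracts the Lamé equation satisfied by $V^\kappa$ from \eqref{E1.5} to obtain an elliptic equation for the perturbation $W^\kappa$ whose source term is quadratic in $W^\kappa$ plus lower-order terms linear in $W^\kappa$ with $V^\kappa$-coefficients, and an inhomogeneity built solely from $V^\kappa$. The key point is that, because $u_0$ is $(-1)$-homogeneous and smooth away from the origin, the Lamé semigroup applied to it produces a $V^\kappa$ that is smooth on $\mathbb R^3$ and satisfies $|\partial^\alpha V^\kappa(x)|\le C(\alpha,\kappa,u_0)(1+|x|)^{-1-|\alpha|}$ — this is the analogue of the Gaussian decay estimate for the heat semigroup used in \cite{Jia14}, and it must be established first, presumably via explicit kernel bounds for $S_\kappa(t)$ (the Lamé kernel being a sum of a heat kernel and a Riesz-type correction, still with the right homogeneity and decay). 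I would then set up a Banach space of functions $W^\kappa$ with weighted-$L^\infty$ (or weighted Sobolev) norms encoding the decay $(1+|x|)^{-3-|\alpha|}$ up to a sufficiently high order, and show the map $W^\kappa\mapsto (\text{inhomogeneous elliptic solve})$ is a contraction on a small ball — small because the inhomogeneity, which is schematically $V^\kappa\cdot\nabla V^\kappa+\tfrac12 V^\kappa\,{\rm div}\,V^\kappa$ smoothed by the resolvent, decays like $(1+|x|)^{-3}$, one power better than $V^\kappa$ itself, which is exactly what drives the improved asymptotics \eqref{E1.71} and the stated decay rate.

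The main technical obstacle is the elliptic regularity and decay theory for the linear operator $L^\kappa W:=-\Delta W-\kappa\nabla{\rm div}\,W-\tfrac x2\cdot\nabla W-\tfrac W2$ on all of $\mathbb R^3$: one needs that $L^\kappa$ (or the full linearization including the $V^\kappa$-dependent terms) is invertible between the appropriate weighted spaces and that it upgrades decay of the right-hand side by two powers of $|x|$. For the Navier-Stokes case ($\kappa=0$) this is essentially in \cite{Jia14}; the new feature here is the $-\kappa\nabla{\rm div}$ term, which breaks the divergence-free structure and means we are no longer working with a Stokes-type operator but a Lamé-type one. I expect this to be handled by Fourier/resolvent analysis of the constant-coefficient part $-\Delta-\kappa\nabla{\rm div}$, whose symbol $|\xi|^2 I_3+\kappa\,\xi\otimes\xi$ is positive definite uniformly in $\kappa\ge0$, so the operator is elliptic (a positive multiple of the identity on each Fourier mode) and the drift term $-\tfrac x2\cdot\nabla-\tfrac12$ is treated perturbatively exactly as in the self-similar Navier-Stokes analysis; uniformity of constants in $\kappa$ is not required for existence, only their finiteness for each fixed $\kappa$, which relaxes the estimates considerably.

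Once $U^\kappa=V^\kappa+W^\kappa$ is constructed with the claimed decay, smoothness follows by a bootstrap: $W^\kappa$ solves an elliptic system with smooth coefficients and smooth right-hand side, so interior (hence global, since the equation is posed on $\mathbb R^3$) Schauder/$L^p$ estimates give $W^\kappa\in C^\infty(\mathbb R^3)$, and differentiating the equation and re-running the weighted estimates yields the bounds on $\partial^\alpha(U^\kappa-V^\kappa)$ for all $\alpha$. The condition \eqref{E1.71} is immediate from $|W^\kappa(x)|=O(|x|^{-3})=o(|x|^{-1})$ together with $|V^\kappa(x)-u_0(x)|=o(|x|^{-1})$, the latter because $V^\kappa-u_0$ solves the Lamé equation with zero initial-time singularity matched and decays faster by the kernel estimates. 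Finally, for the last assertion, I would verify that $u^\kappa(x,t)=t^{-1/2}U^\kappa(x/\sqrt t)$ is a global weak $L^{3,\infty}$-solution in the sense of Definition \ref{globweak}: the scaling ansatz solves \eqref{Toy-Mod1} pointwise by construction, $u_0\in L^{3,\infty}$ since it is $(-1)$-homogeneous, $w^\kappa=u^\kappa-v^\kappa=t^{-1/2}W^\kappa(x/\sqrt t)$ lies in the energy class \eqref{functionspaces} because $W^\kappa\in L_2\cap L_\infty$ with the $O(|x|^{-3})$ decay (a short scaling computation gives $\|w^\kappa(\cdot,t)\|_{L_2}^2\sim t^{1/2}\|W^\kappa\|_{L_2}^2$ and similarly for the gradient term), the weak continuity and the convergence $\|w^\kappa(\cdot,t)\|_{L_2}\to0$ as $t\to0^+$ follow from that same scaling, and the local energy inequality holds with equality because the solution is smooth on $Q_+$. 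The only delicate point in this last step is checking that $v^\kappa=S_\kappa(t)u_0$ genuinely equals $t^{-1/2}V^\kappa(x/\sqrt t)$ and attains $u_0$ in the right sense, which is a direct consequence of the homogeneity of $u_0$ and the scaling invariance of the Lamé system.
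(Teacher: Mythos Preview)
Your proposal contains a genuine gap: the contraction-mapping strategy you outline only works for \emph{small} initial data $u_0$, whereas the theorem (and the original Jia--\v{S}ver\'ak result it extends) is stated for \emph{arbitrary large} $(-1)$-homogeneous data. Concretely, for the fixed-point map to send a small ball in your weighted space to itself, the inhomogeneity $V^\kappa\cdot\nabla V^\kappa+\tfrac12 V^\kappa\,{\rm div}\,V^\kappa$ (after applying $(L^\kappa)^{-1}$) must be small in norm, and for the map to be a contraction the linear-in-$W^\kappa$ terms with $V^\kappa$-coefficients must have operator norm less than one. Both requirements scale like powers of $\|u_0\|_{L^{3,\infty}}$ and fail once $u_0$ is large. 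Improved \emph{decay rate} of the source (which you correctly identify as $(1+|x|)^{-3}$) does not translate into smallness of its norm.

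The paper overcomes this by replacing the contraction argument with Leray--Schauder degree theory. One introduces a homotopy parameter $\mu\in[0,1]$ (initial data $\mu u_0$), works in a Banach space $X$ with the weaker weights $(1+|x|)^2$ on $W$ and $(1+|x|)^3$ on $\nabla W$, and shows: (i) the nonlinear operator $K(\cdot,\mu)$ is compact on $X$ (because the quadratic and $V_\mu$-linear source terms decay like $(1+|x|)^{-4}$, and Proposition~\ref{PropB.1} upgrades this to $(1+|x|)^{-3}$ decay of $W$, one order better than the $X$-norm requires); (ii) for $\mu$ small the problem is solvable by the implicit function theorem; and (iii) any solution along the homotopy satisfies a uniform a priori bound in $X$. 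Step (iii) is the substantial new ingredient you are missing: it is the content of Theorem~\ref{ThmB.1}, whose proof rests on local-in-space H\"older regularity of weak $L^{3,\infty}$-solutions near the initial time combined with the self-similar scaling---this is precisely the Jia--\v{S}ver\'ak mechanism, adapted here to the Lam\'e operator. Without that a priori estimate, no large-data existence argument can close. Your verification that the resulting $u^\kappa$ is a global weak $L^{3,\infty}$-solution is essentially correct once $U^\kappa$ is in hand.
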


\begin{remark}
    The result in the above theorem holds also true for the boundary value problem  \eqref{E1.6} and (\ref{E1.71}); with some simplifications in the computations due to the divergence structure of the nonlinearity. We do not give  the details of the computations for the sake of brevity.
\end{remark}
The proof of Theorem \ref{existenceself-similar} can be done along the lines of the paper \cite{Jia14} and it is given in Appendix II.

Now, we are able to state the main results of the paper. It is about behaviour of solutions to the boundary value problem (\ref{E1.5}) and (\ref{E1.71}) as $\kappa\to \infty$. To this end, let us make a simple remark: if ${\rm div}\,u_0=0$, then $v^\kappa(x,t)=v(x,t)=S(t)u_0(x)$, where $S(t)$ is a semigroup associated with the usual heat equation.
\begin{theorem}\label{UniformEstimates}
Let $u_0\in C^{\infty}(\mathbb{R}^3\setminus \{0\})$ such that $\lambda u_0(\lambda x) = u_0(x)$ for all $\lambda >0$ and ${\rm div}\, u_0 = 0$. Let $W^\kappa(x)=U^\kappa(x)-V(x)$, where $U^\kappa$ is a smooth solution to the boundary value problem (\ref{E1.5}) and (\ref{E1.71}) constructed in Theorem \ref{existenceself-similar} and $V(x)=v(x,1)$. Then the following estimates are valid:
\begin{equation}\label{E1.22}
    \|W^{\kappa}\|^2_{L_2(\mathbb{R}^3)} + \|\nabla W^{\kappa}\|^2_{L_2(\mathbb{R}^3)} + \kappa\|{\rm div}\, U^{\kappa}\|^2_{L_2(\mathbb{R}^3)} \leq c\left( \|u_0\|^2_{L^{3,\infty}(\mathbb{R}^3)} + \|u_0\|^4_{L^{3,\infty}(\mathbb{R}^3)} \right),
\end{equation}
and
\begin{equation}\label{E1.23}
    \|\kappa {\rm div}\, U^{\kappa}\|^2_{L_2(\mathbb{R}^3)} \leq c\left( \|u_0\|^2_{L^{3,\infty}(\mathbb{R}^3)} + \|u_0\|^4_{L^{3,\infty}(\mathbb{R}^3)} \right)^2,
\end{equation}
where $c>0$ is a universal constant.

Moreover,
\begin{equation}\label{H2est}
\int_{\mathbb{R}^3}|\nabla^2 W^{\kappa}|^2 dx + \kappa^2 \int_{\mathbb{R}^3}|\nabla {\rm div}\, W^{\kappa}|^2 dx \leq C(\|u_0\|_{L^{3,\infty}(\mathbb{R}^3)}).
\end{equation}

\end{theorem}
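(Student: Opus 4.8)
The plan is to derive the estimates \eqref{E1.22}--\eqref{H2est} as $\kappa$-uniform energy bounds for $W^\kappa$ by testing the profile equation \eqref{E1.5} (rewritten for $W^\kappa = U^\kappa - V$) against suitable multipliers. First I would record that, thanks to Theorem \ref{existenceself-similar}, $u^\kappa(x,t) = t^{-1/2}U^\kappa(x/\sqrt t)$ is a global weak $L^{3,\infty}$-solution with $w^\kappa(x,t) = t^{-1/2}W^\kappa(x/\sqrt t)$, since $v^\kappa = v = S(t)u_0$ by the divergence-free assumption on $u_0$ and hence $V^\kappa = V$. The global energy estimate \eqref{E1.21} of Theorem \ref{existenceglobalweaksol} then reads, after the self-similar change of variables $s = 1$, precisely as the scale-invariant quantity
\[
\int_{\mathbb R^3}\Big(\tfrac12|W^\kappa|^2 + |\nabla W^\kappa|^2 + \kappa|{\rm div}\,W^\kappa|^2\Big)\,\frac{dx}{\text{(scaling weight)}},
\]
so the first inequality \eqref{E1.22} should fall out almost directly from \eqref{E1.21} by performing that substitution and absorbing the resulting powers of $t$ (the factor $t^{1/2}$ cancels against the Jacobian), using also the decay estimates from Theorem \ref{existenceself-similar} to justify that all integrals converge. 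The bound $\kappa\|{\rm div}\,U^\kappa\|_{L_2}^2 = \kappa\|{\rm div}\,W^\kappa\|_{L_2}^2$ (since ${\rm div}\,V = 0$) is included for free.

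For the stronger estimate \eqref{E1.23} on $\|\kappa\,{\rm div}\,U^\kappa\|_{L_2}^2$, I would test \eqref{E1.5} against $\nabla\,{\rm div}\,U^\kappa$, or equivalently apply ${\rm div}$ to \eqref{E1.5} and test the resulting scalar equation for $d^\kappa := {\rm div}\,U^\kappa$ against $d^\kappa$ itself. The key point is that ${\rm div}$ applied to $-\kappa\nabla\,{\rm div}\,U^\kappa$ produces $-\kappa\Delta d^\kappa$, which after integration by parts gives $\kappa\|\nabla d^\kappa\|_{L_2}^2$; pairing this with the elliptic structure and the already-established bound \eqref{E1.22} for $W^\kappa$ in $H^1$, together with the decay from Theorem \ref{existenceself-similar} to control the lower-order ${\rm div}$-free linear terms $\tfrac x2\cdot\nabla U^\kappa + \tfrac{U^\kappa}{2}$ and the nonlinearity $U^\kappa\cdot\nabla U^\kappa + \tfrac{U^\kappa}{2}d^\kappa$, one obtains $\kappa^2\|d^\kappa\|_{L_2}^2 \lesssim (\text{RHS of \eqref{E1.22}})^2$. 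The squaring on the right-hand side is the natural signature of the fact that one is estimating $\kappa\,{\rm div}\,U^\kappa$, which is $\kappa$ times a quantity whose $L_2$ norm is bounded uniformly — the extra $\kappa$ must be paid for by an interpolation/absorption against the nonlinearity, which is quadratic, hence the fourth-power (i.e.\ squared-bracket) bound.

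Finally, for the $H^2$-type estimate \eqref{H2est} I would bootstrap: having $W^\kappa \in H^1$ uniformly in $\kappa$ and $\kappa\,{\rm div}\,U^\kappa \in L_2$ uniformly, I rewrite \eqref{E1.5} as $-\Delta W^\kappa = \kappa\nabla\,{\rm div}\,W^\kappa + (\text{good terms})$ and test against $-\Delta W^\kappa$ (equivalently, differentiate and use standard elliptic second-derivative estimates). The term $\kappa\nabla\,{\rm div}\,W^\kappa$ paired with $\Delta W^\kappa = \nabla\,{\rm div}\,W^\kappa + {\rm curl}\,{\rm curl}$-type decomposition yields the $\kappa^2\|\nabla\,{\rm div}\,W^\kappa\|_{L_2}^2$ contribution with a favourable sign, while the nonlinear and drift terms are handled by Sobolev embedding $H^1 \hookrightarrow L^6$ in three dimensions, Hölder, and the polynomial decay of $U^\kappa$ and $V$. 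The main obstacle I anticipate is the treatment of the drift term $\tfrac x2\cdot\nabla U^\kappa$ and the algebraically-decaying (rather than exponentially-decaying) tails: $V(x) = v(x,1)$ and $U^\kappa$ decay only like $|x|^{-1}$ and $|x|^{-3}$ respectively, so the unbounded weight $x$ against $\nabla U^\kappa$ must be carefully paired with decaying factors, and one must verify that the $\kappa$-independent constant $C(\|u_0\|_{L^{3,\infty}})$ genuinely does not blow up — this requires that every occurrence of $\kappa$ either appears in a manifestly non-negative left-hand side term or is absorbed using \eqref{E1.22}--\eqref{E1.23}, never left dangling on the right. A secondary technical point is justifying that all these formal integrations by parts are legitimate, which is where the smoothness and decay from Theorem \ref{existenceself-similar} (uniform in $x$, though with $\kappa$-dependent constants that drop out after the a priori estimates close) are essential.
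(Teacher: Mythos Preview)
Your plan for \eqref{E1.22} is correct and matches the paper: the self-similar substitution into the global energy estimate \eqref{E1.21} gives the $H^1$ and $\kappa\|{\rm div}\,W^\kappa\|_{L_2}^2$ bounds directly.

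There is, however, a genuine gap in your argument for \eqref{E1.23}. Testing the profile equation against $\nabla\,{\rm div}\,U^\kappa$, or equivalently testing the scalar equation for $d^\kappa := {\rm div}\,U^\kappa$ against $d^\kappa$, yields on the good side only $(1+\kappa)\|\nabla d^\kappa\|_{L_2}^2$, while the drift terms contribute $-\tfrac14\|d^\kappa\|_{L_2}^2$ with the \emph{wrong} sign. Nothing in this identity produces $\kappa^2\|d^\kappa\|_{L_2}^2$, and there is no Poincar\'e-type inequality on $\mathbb{R}^3$ that would let you pass from $\kappa\|\nabla d^\kappa\|_{L_2}^2$ to $\kappa^2\|d^\kappa\|_{L_2}^2$. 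Your remark that ``the extra $\kappa$ must be paid for by interpolation/absorption against the nonlinearity'' is not a mechanism that actually closes. The paper proceeds very differently: it works at the time-dependent level and decomposes $w^\kappa$ into pieces solving auxiliary \emph{Stokes} problems (with divergence-free velocity and associated pressures $p^\kappa_i$, $q^\kappa$ that can be bounded in $L_2(Q_T)$ by Calder\'on--Zygmund) plus a remainder $\bar w$ solving a \emph{Lam\'e} problem forced only by a pressure gradient, $\partial_t\bar w-\Delta\bar w-\kappa\nabla\,{\rm div}\,\bar w=\nabla q$. For this last piece the energy inequality reads $\tfrac12\|\bar w\|^2+\|\nabla\bar w\|^2+\kappa\|{\rm div}\,\bar w\|^2\le \|q\|\,\|{\rm div}\,\bar w\|$; multiplying through by $\kappa$ and applying Young gives $\|\kappa\,{\rm div}\,\bar w\|^2\le\|q\|^2$. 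This is the key trick (attributed to Ladyzhenskaya--Seregin) that manufactures the extra factor of $\kappa$, and the squared bracket on the right of \eqref{E1.23} is exactly the square of the $L_2$ pressure bound---not a consequence of the nonlinearity being quadratic.

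Your strategy for \eqref{H2est} is closer to the paper's but still conflates two separate steps. Pairing $-\kappa\nabla\,{\rm div}\,W^\kappa$ with $\Delta W^\kappa$ (using $\Delta=\nabla\,{\rm div}-{\rm curl}\,{\rm curl}$) yields $+\kappa\|\nabla\,{\rm div}\,W^\kappa\|_{L_2}^2$ on the left, \emph{not} $\kappa^2$. The paper indeed tests the profile equation against $\Delta W^\kappa$ to obtain $\|\nabla^2 W^\kappa\|_{L_2}^2+\kappa\|\nabla\,{\rm div}\,W^\kappa\|_{L_2}^2\le C$, and then performs a separate step for the $\kappa^2$ bound: one isolates $\kappa\nabla\,{\rm div}\,W^\kappa=-\tfrac{x}{2}\cdot\nabla W^\kappa+F^\kappa$ where $F^\kappa$ collects $-\Delta W^\kappa$, the nonlinearity, and lower-order terms (now all uniformly in $L_2$ by the freshly obtained $H^2$ bound), and pairs this identity with $\kappa\nabla\,{\rm div}\,W^\kappa$. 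The crucial observation is that $-\tfrac12\int(x\cdot\nabla W^\kappa)\cdot\kappa\nabla\,{\rm div}\,W^\kappa=-\tfrac{\kappa}{4}\|{\rm div}\,W^\kappa\|_{L_2}^2\le 0$ after integration by parts, so the drift term helps rather than hurts, and Cauchy--Schwarz on the $F^\kappa$ term closes to $\kappa^2\|\nabla\,{\rm div}\,W^\kappa\|_{L_2}^2\le\|F^\kappa\|_{L_2}^2$. Without this isolation argument your $\kappa^2$ claim does not follow.
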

\begin{theorem}\label{Convergence} Under the assumptions of Theorem \ref{UniformEstimates},
 there exists a subsequence, still indexed by $\kappa$, such that
\begin{equation}\label{E1.24}
    W^\kappa\to  W,\qquad \nabla W^\kappa\to\nabla W, \qquad\nabla^2 W^\kappa\to\nabla^2 W  \end{equation}
and
\begin{equation}
	\label{pressureconvergence}
	\kappa\,{\rm div}\,w^k\rightharpoonup P\qquad \kappa\nabla {\rm div}\,w^k\rightharpoonup\nabla P\end{equation}
in $L_2(\mathbb R^3)$, where limiting functions $U=V+W$  and $P$ have the following properties:

(i)
\begin{equation}\label{DecayNavierStokes}
|\partial^{\alpha}W(x)| \leq \frac{}{}\frac{C(\alpha,u_0)}{(1 + |x|)^{3+|\alpha|}};
	\end{equation} for all $\alpha\in \mathbb{N}^3$ (with $|\alpha| = \alpha_1 + \alpha_2 + \alpha_3$) and $x\in\mathbb R^3$;

(ii) the function $u(x,t) = \frac{1}{\sqrt{t}}U(\frac{x}{\sqrt{t}})$ (together with $p(x,t)=\frac 1tP(\frac{x}{\sqrt{t}})$) is a global weak $L^{3,\infty}-$solution (in the sense of Barker-Seregin-\v{S}ver\'ak \cite{Bark18}) to the incompressible Navier-Stokes system \eqref{NS} with initial data $u_0$.
\end{theorem}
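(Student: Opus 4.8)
The plan is to pass to the limit $\kappa\to\infty$ in the elliptic system \eqref{E1.5}, using the uniform bounds from Theorem \ref{UniformEstimates} as the source of compactness. First I would extract weakly convergent subsequences: from \eqref{E1.22} we get $W^\kappa\rightharpoonup W$ in $H^1(\mathbb{R}^3)$, and from \eqref{H2est} we get $\nabla^2 W^\kappa\rightharpoonup\nabla^2 W$ in $L_2$, so $W^\kappa\rightharpoonup W$ in $H^2_{\mathrm{loc}}$; similarly $\kappa\,\mathrm{div}\,U^\kappa=\kappa\,\mathrm{div}\,W^\kappa$ is bounded in $L_2$ by \eqref{E1.23}, hence has a weak limit which we christen $P$, and $\kappa\nabla\mathrm{div}\,W^\kappa$ is bounded in $L_2$ by \eqref{H2est}, with weak limit equal to $\nabla P$ (one checks this is consistent by testing against $\nabla\varphi$). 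Crucially, the uniform estimate $\kappa\|\mathrm{div}\,U^\kappa\|_{L_2}^2\le c(\cdots)$ forces $\mathrm{div}\,U^\kappa\to 0$ strongly in $L_2$, so $\mathrm{div}\,U=0$, i.e.\ $U=V+W$ is divergence free (recall $\mathrm{div}\,V=0$ since $\mathrm{div}\,u_0=0$). To upgrade weak convergence to the strong statements \eqref{E1.24} one uses the Rellich--Kondrachov theorem on bounded domains together with the decay estimate \eqref{DecayNavierStokes} to control the tails uniformly: since each $W^\kappa$ obeys the $\kappa$-independent bound in Theorem \ref{existenceself-similar}? — no, that bound depends on $\kappa$ — so instead one must first prove \eqref{DecayNavierStokes} for $W$, and more to the point prove the decay is \emph{uniform in $\kappa$}, which I expect is the genuinely hard part (see below).

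Next I would identify the limiting equation. Write \eqref{E1.5} for $U^\kappa=V+W^\kappa$. The term $-\kappa\nabla\mathrm{div}\,U^\kappa=-\kappa\nabla\mathrm{div}\,W^\kappa\rightharpoonup-\nabla P$ in $L_2$; the linear terms $-\Delta W^\kappa-\frac{x}{2}\cdot\nabla W^\kappa-\frac{W^\kappa}{2}$ converge weakly using the $H^2$ bound and local compactness; and the term $\frac{U^\kappa}{2}\mathrm{div}\,U^\kappa$ tends to $0$ in $L^1_{\mathrm{loc}}$ because $\mathrm{div}\,U^\kappa\to 0$ in $L_2$ while $U^\kappa$ is bounded in $L_2(B_R)$ plus $L^{3,\infty}$. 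The nonlinearity $U^\kappa\cdot\nabla U^\kappa$ passes to the limit $U\cdot\nabla U$ by the strong $H^1_{\mathrm{loc}}$ convergence of $W^\kappa$ (for the $W\cdot\nabla W$ part) and the explicit smoothness/decay of $V$ (for the cross terms $V\cdot\nabla W^\kappa$, $W^\kappa\cdot\nabla V$, $V\cdot\nabla V$). Collecting everything, $U=V+W$ and $P$ solve
\begin{equation*}
-\Delta U+U\cdot\nabla U-\frac{x}{2}\cdot\nabla U-\frac{U}{2}+\nabla P=0,\qquad \mathrm{div}\,U=0\quad\text{in }\mathbb{R}^3,
\end{equation*}
which is precisely \eqref{E1.8}, and the asymptotic \eqref{E1.7} follows from \eqref{DecayNavierStokes} (the decay gives $|U-V|=|W|=O(|x|^{-3})$, and $V(x)-u_0(x)=o(|x|^{-1})$ by the standard parabolic estimate for the heat semigroup on a $(-1)$-homogeneous datum). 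Then, exactly as recalled in the introduction, $u(x,t)=\frac{1}{\sqrt t}U(x/\sqrt t)$ together with $p(x,t)=\frac1t P(x/\sqrt t)$ solves \eqref{NS}; part (ii) — that $u$ is a global weak $L^{3,\infty}$-solution in the Barker--Seregin--\v{S}ver\'ak sense — is obtained by passing to the limit in the defining properties of Definition \ref{globweak} for $u^\kappa$, the local energy inequality surviving the limit by weak lower semicontinuity of the left-hand side (here one uses $\kappa(\mathrm{div}\,u^\kappa)^2\ge0$ is simply dropped, and on the right $\kappa\,\mathrm{div}\,u^\kappa\,u^\kappa\cdot\nabla\phi\to0$ since $\kappa\,\mathrm{div}\,U^\kappa\rightharpoonup P$ only weakly — one must be a little careful and instead use that $\sqrt\kappa\,\mathrm{div}\,U^\kappa\to0$ in $L_2$ and $\sqrt\kappa$ times the rest stays bounded, so the product $\to0$).

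The main obstacle, as flagged, is establishing the decay estimate \eqref{DecayNavierStokes} for the limit $W$, ideally by first proving a $\kappa$-uniform version $|\partial^\alpha W^\kappa(x)|\le C(\alpha,u_0)(1+|x|)^{-3-|\alpha|}$. Theorem \ref{existenceself-similar} gives such decay but with a constant $C(\alpha,\kappa,u_0)$ that a priori blows up as $\kappa\to\infty$; one must revisit that argument (carried out in Appendix II along the lines of \cite{Jia14}) and track the $\kappa$-dependence, showing the decay constants can be chosen uniformly. The natural route is a bootstrap on the equation satisfied by $W^\kappa$: from the $H^2$ bound \eqref{H2est} and Sobolev embedding one gets $W^\kappa$ bounded in $L^6$ and $\nabla W^\kappa$ in $L^6$, hence $W^\kappa\in L^\infty$ uniformly; then one treats \eqref{E1.5} as a linear equation for $W^\kappa$ with the drift $-\frac{x}{2}\cdot\nabla$ (whose favourable sign, together with the zeroth-order term, produces Gaussian-type decay after the substitution that turns the stationary profile equation back into a parabolic Cauchy problem) and the forcing built from $V$ and the quadratic terms, all of which have the claimed decay uniformly in $\kappa$ — the only new ingredient being $-\kappa\nabla\mathrm{div}\,W^\kappa$, which one moves to the right-hand side as $-\nabla(\kappa\,\mathrm{div}\,W^\kappa)$ and controls via the $L_2$-bound \eqref{E1.23} plus \eqref{H2est}, noting that decay of $\kappa\,\mathrm{div}\,W^\kappa$ itself requires its own (elliptic, from applying $\mathrm{div}$ to \eqref{E1.5}) argument. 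Once the $\kappa$-uniform pointwise decay is in hand, Arzel\`a--Ascoli on compacts upgrades \eqref{E1.24} to locally uniform convergence of $W^\kappa$ and its derivatives, the decay passes to $W$, and the proof closes.
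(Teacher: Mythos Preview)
Your extraction of weak limits and identification of the limiting profile equation are correct and match the paper. The substantive divergence is in how you upgrade to \emph{strong} convergence and how you obtain the decay \eqref{DecayNavierStokes}. You propose to first prove a $\kappa$-uniform pointwise decay $|\partial^\alpha W^\kappa(x)|\le C(\alpha,u_0)(1+|x|)^{-3-|\alpha|}$ via a bootstrap, then use Rellich--Kondrachov plus tail control. The paper does neither: it obtains strong $L_2$-convergence of $W^\kappa$, $\nabla W^\kappa$, and $\nabla^2 W^\kappa$ by an \emph{energy argument on the difference equation}. Writing $\bar U^\kappa:=U^\kappa-U$, subtracting \eqref{E1.8} from \eqref{E1.5} and testing with $\bar U^\kappa$ yields
\[
\int_{\mathbb{R}^3}|\nabla\bar U^\kappa|^2+\kappa\int_{\mathbb{R}^3}|{\rm div}\,\bar U^\kappa|^2+\tfrac14\int_{\mathbb{R}^3}|\bar U^\kappa|^2
\le \tfrac{C}{\kappa}+\int_{\mathbb{R}^3}|\bar U^\kappa|^2|\nabla U|,
\]
where the last term tends to zero by dominated convergence, using only the uniform $L^\infty$ bound $\sup_\kappa\|W^\kappa\|_{L^\infty}<\infty$ (Sobolev from \eqref{H2est}) and a.e.\ convergence. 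Testing the same difference equation with $\Delta\bar U^\kappa$ then gives $\nabla^2\bar U^\kappa\to0$ in $L_2$. No $\kappa$-uniform decay is ever needed.

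For the decay \eqref{DecayNavierStokes} itself, the paper also avoids your bootstrap entirely: once the strong convergence shows that $u(x,t)=t^{-1/2}U(x/\sqrt t)$ is a global weak $L^{3,\infty}$-solution of Navier--Stokes (via the self-similar identity $\sup_{0<\tau\le t}\|u^\kappa-u\|_{L_2}^2+\|\nabla(u^\kappa-u)\|_{L_2(Q_t)}^2=\sqrt t\,(\|\bar U^\kappa\|_{L_2}^2+2\|\nabla\bar U^\kappa\|_{L_2}^2)$), the decay of $W=U-V$ follows \emph{a posteriori} from the Jia--\v{S}ver\'ak result \cite{Jia14} applied to the limiting Navier--Stokes profile. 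Your proposed $\kappa$-uniform decay is plausible but is not established in the paper, and your sketch of it (moving $-\kappa\nabla{\rm div}\,W^\kappa$ to the right and invoking \eqref{E1.23}, \eqref{H2est}) gives only $L_2$ control of that term, not pointwise decay, so the bootstrap as written has a gap. The paper's route bypasses this difficulty altogether.
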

\begin{remark}
    Once more, the above result holds also for system \eqref{E1.6}.
\end{remark}

\setcounter{equation}{0}
\section{Preliminaries}

Set $Q_{T_1,T_2}=\Omega\times ]T_1,T_2[$, where $\Omega$ is a domain in $\mathbb R^3$.  The  notation for mixed Lebesgue and Sobolev spaces is as follows:
$L_{m,n}(Q_{T_1,T_2}):= L_n(T_1,T_2;L_m(\Omega))$, the Lebesgue space with the norm
\[
\|v\|_{m,n,Q_{T_1,T_2}} = \begin{cases}
\left( \int_{T_1}^{T_2}\|v(\cdot,t)\|^n_{L_m(\Omega)}dt\right)^{1/n},\quad & 1\leq n< \infty\\
\displaystyle{\rm ess sup}_{t\in (T_1,T_2)}\|v(\cdot,t)\|_{L_m(\Omega)},\quad & n= \infty,
\end{cases}
\]
\[ L_m(Q_{T_1,T_2})=L_{m,m}(Q_{T_1,T_2}), \quad \|v\|_{m,m,Q_{T_1,T_2}} = \|v\|_{m,Q_{T_1,T_2}}; \]
$W^{1,0}_{m,n}(Q_{T_1,T_2})$, $W^{2,1}_{m,n}(Q_{T_1,T_2})$ are the Sobolev spaces with mixed norm,
\[ W^{1,0}_{m,n}(Q_{T_1,T_2}) = \left\{ v,\nabla v \in L_{m,n}(Q_{T_1,T_2}) \right\}, \]
\[ W^{2,1}_{m,n}(Q_{T_1,T_2}) = \left\{ v,\nabla v, \nabla^2 v, \partial_t v \in L_{m,n}(Q_{T_1,T_2}) \right\}, \]
\[ W^{1,0}_m(Q_{T_1,T_2}) = W^{1,0}_{m,m}(Q_{T_1,T_2}), \quad W^{2,1}_m(Q_{T_1,T_2}) = W^{2,1}_{m,m}(Q_{T_1,T_2}).\]

In this work $L^{p,\infty}(\Omega)$ ($0<p<\infty$) stands for the \textit{weak} $L_p(\Omega)$ space of functions $f$ such that
$$
  \|f\|_{L^{3,\infty}(\Omega)}:= \sup_{\gamma > 0} \left\{\gamma |\{ x \in \Omega: |f(x)|> \gamma \}|^{\frac{1}{p}}\right\} < \infty.$$
It is not difficult to show that $L_p(\Omega) \subset L^{p,\infty}(\Omega)$, and this holds for $\Omega$ with finite measure or not (see \cite{Grafa08} for more properties of this function space). Those are a special case of the Lorentz spaces $L^{p,q}(\Omega)$ (with $0<p,q \leq \infty$) which consist, when $p,q \neq \infty$, of functions $f$ such that
\[
\|f\|_{L^{p,q}(\Omega)} := p^{\frac{1}{q}}\left(\int_0^{\infty} s^{q-1}|\{ x \in \Omega: |f(x)|> s \}|^{\frac{q}{p}}\right)^{\frac{1}{q}} < \infty,
\]
with $L^{\infty,q} = \{0\}$ whenever $0<q<\infty$ and $L^{p,p} = L_p$ for every $0<p\leq \infty$.

We use $c$ or $C$ to denote an absolute constant and we write $C(A,B,\ldots)$ when the constant depends on the parameters $A,B,\ldots$.


We record some estimates for the solutions to the Cauchy problem for the Lam\'e system.
\begin{pro}\label{Prop1.1}
Let $S_{\kappa}(t)$ be the semigroup associated to the Lam\'e system, see the Cauchy problem (\ref{lame}) and  (\ref{semi-initial}). Then
\begin{equation}\label{E1.11}
    \|S_{\kappa}(t)u_0\|_{L^{3,\infty}(\mathbb{R}^3)} \leq c\|u_0\|_{L^{3,\infty}(\mathbb{R}^3)},\quad\forall t\geq 0.
\end{equation}
For $1\leq s_1 \leq s$, we have
\begin{equation}\label{E1.12}
    \|S_{\kappa}(t)u_0\|_{L_s(\mathbb{R}^3)} \leq c(s,s_1)\left[1 + (1+\kappa)^{-\frac{1}{l}} \right] t^{-\frac{1}{l}}\|u_0\|_{L_{s_1}(\mathbb{R}^3)},
\end{equation}
where
\[
\frac{1}{l} = \frac{3}{2}\left( \frac{1}{s_1} - \frac{1}{s} \right).
\]
\end{pro}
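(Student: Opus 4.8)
The plan is to reduce both estimates to classical bounds for the heat semigroup by exploiting the block structure of the Lam\'e operator under the Helmholtz decomposition. Write $\mathbb{P}$ for the Leray projection onto solenoidal fields and $\mathbb{Q}=\mathrm{Id}-\mathbb{P}$ for the complementary projection onto gradient fields, and split $u_0=\mathbb{P}u_0+\mathbb{Q}u_0$. Since $\mathbb{P}u_0$ is divergence free, the term $\kappa\nabla{\rm div}$ annihilates it along the evolution, so $S_\kappa(t)\mathbb{P}u_0=S(t)\mathbb{P}u_0$, where $S(\tau)$ denotes the heat semigroup; since $\mathbb{Q}u_0=\nabla\phi_0$ for a scalar potential and $\nabla{\rm div}\,\nabla\phi=\Delta\nabla\phi$, the gradient part evolves by the heat equation run at speed $1+\kappa$, i.e. $S_\kappa(t)\mathbb{Q}u_0=S\big((1+\kappa)t\big)\mathbb{Q}u_0$. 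Equivalently, on the Fourier side the symbol $\exp\!\big(-t(|\xi|^2 I_3+\kappa\,\xi\otimes\xi)\big)$ splits as $e^{-t|\xi|^2}\big(I_3-|\xi|^{-2}\xi\otimes\xi\big)+e^{-(1+\kappa)t|\xi|^2}|\xi|^{-2}\xi\otimes\xi$, which is precisely the symbol of $S(t)\mathbb{P}+S\big((1+\kappa)t\big)\mathbb{Q}$. Hence
\[
S_\kappa(t)=S(t)\,\mathbb{P}+S\big((1+\kappa)t\big)\,\mathbb{Q}.
\]

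For \eqref{E1.11}, recall that $\mathbb{P}$ and $\mathbb{Q}$ are Calder\'on--Zygmund operators, bounded on $L_p(\mathbb{R}^3)$ for $1<p<\infty$, and therefore, by real interpolation, on every Lorentz space $L^{p,q}(\mathbb{R}^3)$ with $1<p<\infty$; in particular on $L^{3,\infty}(\mathbb{R}^3)$ (see \cite{Grafa08}). Moreover $S(\tau)$ is convolution with the probability density $\Gamma_\tau$, $\|\Gamma_\tau\|_{L_1(\mathbb{R}^3)}=1$, so Young's inequality in Lorentz spaces ($L_1*L^{3,\infty}\hookrightarrow L^{3,\infty}$) gives $\|S(\tau)f\|_{L^{3,\infty}(\mathbb{R}^3)}\le c\,\|f\|_{L^{3,\infty}(\mathbb{R}^3)}$ for every $\tau\ge0$. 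Combining with the splitting, $\|S_\kappa(t)u_0\|_{L^{3,\infty}}\le c\,\|\mathbb{P}u_0\|_{L^{3,\infty}}+c\,\|\mathbb{Q}u_0\|_{L^{3,\infty}}\le c\,\|u_0\|_{L^{3,\infty}}$, uniformly in $\kappa$ and $t$.

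For \eqref{E1.12}, start from the standard smoothing estimate $\|S(\tau)f\|_{L_s(\mathbb{R}^3)}\le c(s,s_1)\,\tau^{-1/l}\|f\|_{L_{s_1}(\mathbb{R}^3)}$ with $1/l=\frac32(1/s_1-1/s)$, which follows from Young's inequality together with $\|\Gamma_\tau\|_{L_r(\mathbb{R}^3)}=c_r\,\tau^{-\frac32(1-1/r)}$, $1/r=1-(1/s_1-1/s)$. When $1<s_1\le s$ the operators $\mathbb{P},\mathbb{Q}$ are bounded on $L_{s_1}$ and the splitting yields
\begin{multline*}
\|S_\kappa(t)u_0\|_{L_s}\le c(s,s_1)\,t^{-1/l}\|\mathbb{P}u_0\|_{L_{s_1}}+c(s,s_1)\big((1+\kappa)t\big)^{-1/l}\|\mathbb{Q}u_0\|_{L_{s_1}}\\ \le c(s,s_1)\big[1+(1+\kappa)^{-1/l}\big]t^{-1/l}\|u_0\|_{L_{s_1}}.
\end{multline*}
The only point needing extra care is the endpoint $s_1=1$ (necessarily with $s>1$), where $\mathbb{P}$ is unbounded on $L_1$: there one works directly with the matrix kernels $K^{\mathbb{P}}_\tau$, $K^{\mathbb{Q}}_\tau$ of $S(\tau)\mathbb{P}$ and $S(\tau)\mathbb{Q}$, which by scaling satisfy $K_\tau(x)=\tau^{-3/2}K_1(x/\sqrt\tau)$ with $K_1$ smooth away from the origin and of size $|x|^{-3}$ at infinity; hence $K_1\in L_s(\mathbb{R}^3)$ for every $s>1$, $\|K_\tau\|_{L_s}=c\,\tau^{-\frac32(1-1/s)}=c\,\tau^{-1/l}$, and Young's inequality $L_s*L_1\subset L_s$ gives the same bound.

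I expect the genuinely delicate part to be exactly this $s_1=1$ endpoint, together with the bookkeeping needed to carry the Calder\'on--Zygmund and Young inequalities through the Lorentz scale; once the identity $S_\kappa(t)=S(t)\mathbb{P}+S\big((1+\kappa)t\big)\mathbb{Q}$ is established, both bounds are essentially immediate, the factor $(1+\kappa)^{-1/l}$ arising precisely from the accelerated diffusion of the gradient component $\mathbb{Q}u_0$.
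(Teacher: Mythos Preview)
Your proof is correct and follows essentially the same route as the paper's: Helmholtz decomposition $u_0=\mathbb{P}u_0+\mathbb{Q}u_0$, the identity $S_\kappa(t)=S(t)\mathbb{P}+S\big((1+\kappa)t\big)\mathbb{Q}$, Calder\'on--Zygmund bounds carried to Lorentz spaces by real interpolation for \eqref{E1.11}, and heat-kernel Young estimates for \eqref{E1.12}. You are in fact slightly more careful than the paper at the endpoint $s_1=1$, where the paper's appeal to $\|\mathbb{P}u_0\|_{L_{s_1}}+\|\mathbb{Q}u_0\|_{L_{s_1}}\le c(s_1)\|u_0\|_{L_{s_1}}$ is not available, while your direct kernel argument handles it.
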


\begin{proof}
The classical Calderon-Zygmund combined with real interpolation methods allow us to get the existence of a unique function $q_0$ up to a constant (it doesn't matter here since we are interested in the gradient of $q_0$) such that $\Delta q_0 = {\rm div}\, u_0$ and
\begin{equation}\label{E1.13}
    \|\nabla q_0\|_{L^{3,\infty}(\mathbb{R}^3)} \leq c\|u_0\|_{L^{3,\infty}(\mathbb{R}^3)}.
\end{equation}
Set $u_0^{(1)} := \nabla q_0$ and $u_0^{(0)} := u_0 - u^{(1)}_0$; notice that ${\rm div}\, u^{(0)}_0 = 0$ and ${\rm curl}\, u_0^{(1)} = 0$ by definition.
It is easy to check that $\Delta v^1=\nabla {\rm div}\,v^1$ and ${\rm div}\,v^0=0$, where
$$\partial_tv^1-(1+\kappa)\Delta v^1=0, \qquad v^1|_{t=0}=u^{(1)}_0$$
and
$$\partial_tv^0-\Delta v^0=0, \qquad v^0|_{t=0}=u^{(0)}_0.$$
Moreover,
\begin{equation}\label{solformula}
	S_\kappa(t)u_0(\cdot)=v^0(\cdot,t)+v^1(\cdot,t)=S(1+\kappa)u^{(1)}_0+S(1)u^{(0)}_0.
\end{equation}
Finally, from \eqref{E1.13} and (\ref{solformula}), from the convolution structure of the heat potential and from Young's inequality for weak type spaces (see \cite{Grafa08} Theorem 1.2.13), we get \eqref{E1.11}.

For \eqref{E1.12}, we see have instead
\[
\|u_0^{(0)}\|_{L_{s_1}(\mathbb{R}^3)} + \|u_0^{(1)}\|_{L_{s_1}(\mathbb{R}^3)} \leq c(s_1)\|u_0\|_{L_{s_1}(\mathbb{R}^3)}.
\]
And once again, using the representation formula for solutions of the heat equation, Young's inequality and scaling arguments, we have \eqref{E1.12}. This concludes the proof.
\end{proof}


\setcounter{equation}{0}
\section{Uniform Estimates}

In this section we are going to prove Theorem
\ref{UniformEstimates}.

Since estimate (\ref{E1.22}) follows from estimate (\ref{E1.21}),
our main goal now is to prove the following bound
\begin{equation} \label{important}
    \int_0^T\int_{\mathbb{R}^3}|\kappa {\rm div}\, w^{\kappa}|^2 dx ds \leq cT^{\frac{1}{2}}\left(\|u_0\|^2_{L^{3,\infty}(\mathbb{R}^3)} + \|u_0\|^4_{L^{3,\infty}(\mathbb{R}^3)} \right)^2
\end{equation}
for all $T>0$ and for an absolute constant $c>0$, see notation in Section 1.
Its proof is divided into three parts.

\paragraph{Part I [A priori estimates].} We focus here, only, on system \eqref{Toy-Mod1} since things are the same for system \eqref{Toy-Mod2} with some simplifications due to the divergence structure of the non-linearity. We recall that $u^{\kappa} = w^{\kappa} + v$ and
\begin{equation*}
    \begin{gathered}
    U^{\kappa}(x) := u^{\kappa}(x,1)\quad\mbox{and}\quad u^{\kappa}(x,t) = \frac{1}{\sqrt{t}}U^{\kappa}\left( \frac{x}{\sqrt{t}} \right)\\
    v(x,t) = S(t)u_0(x) = \frac{1}{\sqrt{t}}V\Big(\frac x{\sqrt{t}} \Big)\quad\mbox{and}\quad W^{\kappa} := U^{\kappa} - V
    \end{gathered}
\end{equation*}
(notice that ${\rm div}\, v = 0$ in $Q_+:=\mathbb{R}^3\times ]0,\infty[$ since ${\rm div}\, u_0 = 0$) and
\begin{multline}\label{E2.10}
    \partial_t w^{\kappa} - \Delta w^{\kappa} - \kappa\nabla {\rm div}\, w^{\kappa} = -( w^{\kappa}\cdot \nabla w^{\kappa} + \frac{w^{\kappa}}{2}{\rm div}\, w^{\kappa}) - (v\cdot \nabla w^{\kappa} + \frac{v}{2}{\rm div}\, w^{\kappa} \\ + w^{\kappa}\cdot \nabla v+ v\cdot \nabla v) \end{multline}
in $Q_+$.

Now, we introduce the functions $w^{\kappa,1}$, $\hat{w}^{\kappa,i}$ and $p^{\kappa}_i$ ($i=1,2,3$) as solutions to the following Cauchy problems

\begin{equation}\label{E2.13}
    \left\{
    \begin{gathered}
    \partial_t w^{\kappa,1} - \Delta w^{\kappa,1} - \kappa \nabla {\rm div}\, w^{\kappa,1} =  -( w^{\kappa}\cdot \nabla w^{\kappa} + \frac{w^{\kappa}}{2}{\rm div}\, w^{\kappa})\quad \mbox{in } Q_+
    \\
    w^{\kappa,1}|_{t=0} = 0 \quad \mbox{in } \mathbb{R}^3,
    \end{gathered}
    \right.
\end{equation}

\begin{equation}\label{E2.14}
    \left\{
    \begin{gathered}
    \partial_t \hat{w}^{\kappa,1} - \Delta \hat{w}^{\kappa,1} + \nabla p^{\kappa}_1 =  -( v\cdot \nabla w^{\kappa} + \frac{1}{2}{\rm div}\, (v\otimes w^{\kappa}))\quad \mbox{in } Q_+
    \\
    {\rm div}\, \hat{w}^{\kappa,1} = 0 \quad \mbox{in } Q_+
     \\
    \hat{w}^{\kappa,1}|_{t=0} = 0 \quad \mbox{in }\mathbb{R}^3,
    \end{gathered}
    \right.
\end{equation}

\begin{equation}\label{E2.15}
    \left\{
    \begin{gathered}
    \partial_t \hat{w}^{\kappa,2} - \Delta \hat{w}^{\kappa,2} + \nabla p^{\kappa}_2 =  -\frac{1}{2} w^{\kappa}\cdot \nabla v\quad \mbox{in } Q_+
    \\
    {\rm div}\, \hat{w}^{\kappa,2} = 0 \quad \mbox{in } Q_+
    \\
    \hat{w}^{\kappa,2}|_{t=0} = 0 \quad \mbox{in }\mathbb{R}^3,
    \end{gathered}
    \right.
\end{equation}
and
\begin{equation}\label{E2.16}
    \left\{
    \begin{gathered}
    \partial_t \hat{w}^{\kappa,3} - \Delta \hat{w}^{\kappa,3} + \nabla p^{\kappa}_3 =  -v\cdot \nabla v\quad \mbox{in } Q_+
    \\
    {\rm div}\, \hat{w}^{\kappa,3} = 0 \quad \mbox{in } Q_+
    \\
    \hat{w}^{\kappa,3}|_{t=0} = 0 \quad \mbox{in }\mathbb{R}^3.
    \end{gathered}
    \right.
\end{equation}
The proof for the unique solvability of the above Cauchy problems  (\ref{E2.13})-(\ref{E2.16}) in the energy class
\begin{multline}\label{E2.11}
    \sup_{0<t<T}\left(\|w^{\kappa,1}(\cdot,t)\|^2_{L_2(\mathbb{R}^3)} + \|\hat{w}^{\kappa,i}(\cdot,t)\|^2_{L_2(\mathbb{R}^3)} \right)\\ + \int_0^T\int_{\mathbb{R}^3}\left( |\nabla w^{\kappa,1}(x,t)|^2 + |\nabla \hat{w}^{\kappa,i}(x,t)|^2 \right)dx dt < +\infty,\forall T>0
\end{multline}
and
\begin{equation}\label{E2.12}
   \int_0^T\int_{\mathbb{R}^3} |p^{\kappa}_i(x,t)|^2 dx dt < \infty, \forall T>0~(i=1,2,3)
\end{equation}
is more or less standard, see estimates below and Part III.


We now derive some uniform (in $\kappa$) $L_2$-estimates for the pressure functions $p^{\kappa}_i$ in $Q_T:=\mathbb{R}^3\times ]0,T[$. From \eqref{E2.14}, we have
$$
-\Delta p^{\kappa}_1 = {\rm div}\,{\rm div}\,(w^{\kappa}\otimes v + \frac{1}{2}v\otimes w^{\kappa}),
$$
thus
\begin{align*}
    \|p^{\kappa}_1(\cdot,t)\|_{L_2(\mathbb{R}^3)} &\leq c \||w^{\kappa}(\cdot,t)|\cdot|v(\cdot,t)|\|_{L_2(\mathbb{R}^3)}\\
    &\leq c \|w^{\kappa}(\cdot,t)\|_{L^{6,2}(\mathbb{R}^3)}\|v(\cdot,t)\|_{L^{3,\infty}(\mathbb{R}^3)}\\
    &\leq c\|u_0\|_{L^{3,\infty}}\|\nabla w^{\kappa}(\cdot,t)\|_{L_2(\mathbb{R}^3)},
\end{align*}
for a.e. $t\in ]0,T[$. Consequently, taking into account Theorem \ref{existenceglobalweaksol}, we find
\begin{equation}\label{E2.17}
    \int_0^T\int_{\mathbb{R}^3}|p^{\kappa}_1(x,t)|^2 dx dt \leq cT^{\frac{1}{2}}\|u_0\|^2_{L^{3,\infty}(\mathbb{R}^3)}\left(\|u_0\|^2_{L^{3,\infty}(\mathbb{R}^3)} + \|u_0\|^4_{L^{3,\infty}(\mathbb{R}^3)} \right),
\end{equation}
for all $T>0$.

Next, we have
\[
-\Delta p^{\kappa}_2 = {\rm div}\,(\frac{1}{2} w^{\kappa}\cdot \nabla v),
\]
and thus
\begin{align*}
    \|\nabla p^{\kappa}_2(\cdot,t)\|_{L^{\frac{6}{5},2}(\mathbb{R}^3)} &\leq c \|w^{\kappa}(\cdot,t)\|_{L_2(\mathbb{R}^3)}\|\nabla v(\cdot,t)\|_{L^{3,\infty}(\mathbb{R}^3)}\\
    &\leq \frac{c}{t^{\frac{1}{2}}}\|w^{\kappa}(\cdot,t)\|_{L_2(\mathbb{R}^3)} \|u_0\|_{L^{3,\infty}(\mathbb{R}^3)}\quad \mbox{for a.e. }t\in ]0,T[,
\end{align*}
where the last inequality is a consequence of Young's inequality for weak type spaces (see \cite{Grafa08}, Theorem 1.2.13) and $L_1(\mathbb{R}^3)$-estimate for the gradient of the heat kernel. Consequently, changing the $p^\kappa_2$ in a suitable way, applying  Sobolev embedding and Theorem \ref{existenceglobalweaksol}, we find
\begin{equation}\label{E2.18}
    \int_0^T\int_{\mathbb{R}^3}|p^{\kappa}_2(x,t)|^2 dx dt \leq cT^{\frac{1}{2}}\|u_0\|^2_{L^{3,\infty}(\mathbb{R}^3)}\left(\|u_0\|^2_{L^{3,\infty}(\mathbb{R}^3)} + \|u_0\|^4_{L^{3,\infty}(\mathbb{R}^3)} \right),
\end{equation}
for all $T>0$. Finally, we have
\[
-\Delta p^{\kappa}_3 = {\rm div}\,{\rm div}\,(v\otimes v),
\]
thus
\begin{align*}
    \|p^{\kappa}_3(\cdot,t)\|_{L_2(\mathbb{R}^3)} &\leq c \|v(\cdot,t)\|^2_{L_4(\mathbb{R}^3)}\\
    &\leq \frac{c}{t^{\frac{1}{4}}}\|u_0\|^2_{L^{3,\infty}(\mathbb{R}^3)}\quad \mbox{for a.e. }t\in ]0,T[,
\end{align*}
where again the last estimate follows from the convolution structure of the heat potential and the corresponding inequalities. Consequently,
\begin{equation}\label{E2.19}
    \int_0^T\int_{\mathbb{R}^3}|p^{\kappa}_3(x,t)|^2 dx dt \leq c T^{\frac{1}{2}}\|u_0\|^4_{L^{3,\infty}},
\end{equation}
for all $T>0$. In conclusion, we have obtained that
\begin{equation}\label{E2.20}
    \sum_{i=1}^3\int_0^T\int_{\mathbb{R}^3}|p^{\kappa}_i(x,t)|^2 dx dt \leq cT^{\frac{1}{2}}\|u_0\|^2_{L^{3,\infty}(\mathbb{R}^3)}\left(\|u_0\|^2_{L^{3,\infty}(\mathbb{R}^3)} + \|u_0\|^4_{L^{3,\infty}(\mathbb{R}^3)} \right),
\end{equation}
for all $T>0$.

Going back once more to Theorem \ref{existenceglobalweaksol}, we see that
\[
\|W^{\kappa}\|^2_{L_2(\mathbb{R}^3)} + \|\nabla W^{\kappa}\|^2_{L_2(\mathbb{R}^3)} \leq c\left(\|u_0\|^2_{L^{3,\infty}(\mathbb{R}^3)} + \|u_0\|^4_{L^{3,\infty}(\mathbb{R}^3)} \right);
\]
therefore, by Sobolev embedding, we have
\[
\|W^{\kappa}\|_{L_3(\mathbb{R}^3)} \leq c \left(\|u_0\|^2_{L^{3,\infty}(\mathbb{R}^3)} + \|u_0\|^4_{L^{3,\infty}(\mathbb{R}^3)} \right)^{\frac{1}{2}},
\]
where $c>0$ is an absolute constant independent of $\kappa$. Consequently, we have
\begin{equation}\label{E2.21}
    \sup_{0<t<\infty}\|w^{\kappa}(\cdot,t)\|_{L_3(\mathbb{R}^3)} \leq c \left(\|u_0\|^2_{L^{3,\infty}(\mathbb{R}^3)} + \|u_0\|^4_{L^{3,\infty}(\mathbb{R}^3)} \right)^{\frac{1}{2}}.
\end{equation}
From the latter estimate, we have (thanks again to Theorem \ref{existenceglobalweaksol}) that
\begin{equation}\label{E2.22}
    \|w^{\kappa}\cdot \nabla w^{\kappa} + \frac{w^{\kappa}}{2}{\rm div}\, w^{\kappa}\|^2_{L_{\frac{6}{5},2}(\mathbb{R}^3\times ]0,T[)} \leq c T^{\frac{1}{2}} \left(\|u_0\|^2_{L^{3,\infty}(\mathbb{R}^3)} + \|u_0\|^4_{L^{3,\infty}(\mathbb{R}^3)} \right)^2.
\end{equation}

Now, let us introduce the functions (whose existence's justification is similar to the one of $\hat{w}^{\kappa,2}$ and $p^{\kappa}_2$)
\[
z^{\kappa} \in L_{2,\infty}(Q_T)\cap W^{1,0}_2(Q_T)\quad \forall T>0,
\]
and
\[
q^{\kappa} \in L_2(Q_T)\quad \forall T>0,
\]
such that
\begin{equation}\label{E2.23}
    \left\{
    \begin{gathered}
    \partial_t z^{\kappa} - \Delta z^{\kappa} + \nabla q^{\kappa} =  -(w^{\kappa}\cdot \nabla w^{\kappa} + \frac{w^{\kappa}}{2}{\rm div}\, w^{\kappa})\quad \mbox{in }Q _+\\
    {\rm div}\, z^{\kappa} = 0 \quad \mbox{in } Q_+ \\
    z^{\kappa}|_{t=0} = 0 \quad \mbox{in }\mathbb{R}^3
    \end{gathered}
    \right.
\end{equation}
From the equation
\[
-\Delta q^{\kappa} = {\rm div}\,(w^{\kappa}\cdot \nabla w^{\kappa} + \frac{w^{\kappa}}{2}{\rm div}\, w^{\kappa}),
\]
estimate \eqref{E2.22} and thanks to Sobolev embedding, we see that the pressure $q^\kappa$ can be chosen so that,  for  all $T>0$, the following inequality is valid:
\begin{equation}\label{E2.24}
    \int_0^T\int_{\mathbb{R}^3}|q^{\kappa}(x,t)|^2 dx dt \leq c T^{\frac{1}{2}} \left(\|u_0\|^2_{L^{3,\infty}(\mathbb{R}^3)} + \|u_0\|^4_{L^{3,\infty}(\mathbb{R}^3)} \right)^2.
\end{equation}
Now, we set $\bar{w}^{\kappa,1} := w^{\kappa,1} - z^{\kappa}$ and we see that
\begin{equation}
    \left\{
    \begin{gathered}
    \partial_t \bar{w}^{\kappa,1} - \Delta \bar{w}^{\kappa,1} - \kappa \nabla {\rm div}\, \bar{w}^{\kappa,1} =  \nabla q^{\kappa} \quad \mbox{in }Q_+ \\
    \bar{w}^{\kappa,1}|_{t=0} = 0 \quad \mbox{in }\mathbb{R}^3
    \end{gathered}
    \right.
\end{equation}
Similarly to what has been done in the proof of Theorem \ref{existenceglobalweaksol}, we get for all $t>0$ the energy estimate:
\begin{multline*}
    \frac{1}{2}\int_{\mathbb{R}^3}|\bar{w}^{\kappa,1}(x,t)|^2 dx + \int_0^t\int_{\mathbb{R}^3}|\nabla \bar{w}^{\kappa,1}(x,s)|^2dx ds\\ + \kappa\int_0^t\int_{\mathbb{R}^3}|{\rm div}\, \bar{w}^{\kappa,1}(x,s)|^2 dx ds \leq \left(\int_0^t\int_{\mathbb{R}^3} |q^{\kappa}|^2 dx ds \right)^{\frac{1}{2}}\left(\int_0^t\int_{\mathbb{R}^3}|{\rm div}\, \bar{w}^{\kappa,1}|^2 dx ds\right)^{\frac{1}{2}}.
\end{multline*}
We multiply both sides of the above inequality by $\kappa$ and use Young's inequality to obtain
\begin{multline*}
    \kappa\int_{\mathbb{R}^3}|\bar{w}^{\kappa,1}(x,t)|^2 dx + \kappa\int_0^t\int_{\mathbb{R}^3}|\nabla \bar{w}^{\kappa,1}(x,s)|^2dx ds\\ + \int_0^t\int_{\mathbb{R}^3}|\kappa{\rm div}\, \bar{w}^{\kappa,1}(x,s)|^2 dx ds \leq \int_0^t\int_{\mathbb{R}^3} |q^{\kappa}|^2 dx ds;
\end{multline*}
thus, thanks to \eqref{E2.24}, we find that
\begin{multline*}
    \kappa\int_{\mathbb{R}^3}|\bar{w}^{\kappa,1}(x,T)|^2 dx + \kappa\int_0^T\int_{\mathbb{R}^3}|\nabla \bar{w}^{\kappa,1}(x,s)|^2dx ds\\ + \int_0^T\int_{\mathbb{R}^3}|\kappa{\rm div}\, \bar{w}^{\kappa,1}(x,s)|^2 dx ds \leq c T^{\frac{1}{2}}\left(\|u_0\|^2_{L^{3,\infty}(\mathbb{R}^3)} + \|u_0\|^4_{L^{3,\infty}(\mathbb{R}^3)} \right)^2,
\end{multline*}
for all $T>0$. Since ${\rm div}\, z^{\kappa} = 0$, we finally obtain that
\begin{equation}\label{E2.26}
    \int_0^T\int_{\mathbb{R}^3}|\kappa{\rm div}\, w^{\kappa,1}(x,s)|^2 dx ds \leq c T^{\frac{1}{2}}\left(\|u_0\|^2_{L^{3,\infty}(\mathbb{R}^3)} + \|u_0\|^4_{L^{3,\infty}(\mathbb{R}^3)} \right)^2,
\end{equation}
for all $T>0$. 
Let us  mention that the trick to get the space-time uniform $L_2$-estimate for the term $\kappa {\rm div}\, w^{\kappa,1}$ is inspired by a similar estimate established in \cite{Lady95} for the stationary Lam\'e system and will be used once more in the next step.

\paragraph{Part II.}
We introduce the following functions
\[
\hat{w}^{\kappa} := \sum_{i=1}^3\hat{w}^{\kappa,i},\quad w^{\kappa,2} := w^{\kappa} - w^{\kappa,1} - \hat{w}^{\kappa}\quad\mbox{and }p^{\kappa} := \sum_{i=1}^3p^{\kappa}_i;
\]
we get that
\begin{equation}\label{E2.27}
    \left\{
    \begin{gathered}
    \partial_t w^{\kappa,2} - \Delta w^{\kappa,2} - \kappa \nabla {\rm div}\, w^{\kappa,2} =  \nabla p^{\kappa} \quad \mbox{in }\mathbb{R}^3\times \mathbb{R}_+ \\
    w^{\kappa,2}|_{t=0} = 0 \quad \mbox{in }\mathbb{R}^3,
    \end{gathered}
    \right.
\end{equation}
and again, by means reminiscent to what has been done in the proof of Theorem \ref{existenceglobalweaksol}, we find for all $t>0$ another energy estimate:
\begin{multline*}
    \frac{1}{2}\int_{\mathbb{R}^3}|w^{\kappa,2}(x,t)|^2 dx + \int_0^t\int_{\mathbb{R}^3}|\nabla w^{\kappa,2}(x,s)|^2dx ds\\ + \kappa\int_0^t\int_{\mathbb{R}^3}|{\rm div}\, w^{\kappa,2}(x,s)|^2 dx ds \leq \left(\int_0^t\int_{\mathbb{R}^3} |p^{\kappa}|^2 dx ds \right)^{\frac{1}{2}}\left(\int_0^t\int_{\mathbb{R}^3}|{\rm div}\, w^{\kappa,2}|^2 dx ds\right)^{\frac{1}{2}}.
\end{multline*}
Multiplying both sides of the previous inequality by $\kappa$ and using Young's inequality, we get
\begin{multline*}
    \kappa\int_{\mathbb{R}^3}|w^{\kappa,2}(x,t)|^2 dx + \kappa\int_0^t\int_{\mathbb{R}^3}|\nabla w^{\kappa,2}(x,s)|^2dx ds\\ + \int_0^t\int_{\mathbb{R}^3}|\kappa{\rm div}\, w^{\kappa,2}(x,s)|^2 dx ds \leq \int_0^t\int_{\mathbb{R}^3} |p^{\kappa}(x,s)|^2 dx ds,
\end{multline*}
and thanks to \eqref{E2.20}, we finally find that
\begin{multline}
    \kappa\int_{\mathbb{R}^3}|w^{\kappa,2}(x,T)|^2 dx + \kappa\int_0^T\int_{\mathbb{R}^3}|\nabla w^{\kappa,2}(x,t)|^2dx dt\\ + \int_0^T\int_{\mathbb{R}^3}|\kappa{\rm div}\, w^{\kappa,2}|^2 dx ds \leq cT^{\frac{1}{2}}\|u_0\|^2_{L^{3,\infty}(\mathbb{R}^3)}\left(\|u_0\|^2_{L^{3,\infty}(\mathbb{R}^3)} + \|u_0\|^4_{L^{3,\infty}(\mathbb{R}^3)} \right),
\end{multline}
for all $T>0$. By noticing that ${\rm div}\, \hat{w}^{\kappa} = 0$ and taking into account \eqref{E2.26}, we arrive at  estimate (\ref{important}).
\paragraph{Part III.} All that is left in order to finish the proof of (\ref{important}) is to justify the construction of the functions $w^{\kappa,1}$, $\hat{w}^{\kappa,i}$ and $p^{\kappa}_i$ ($i=1,2,3$) such that \eqref{E2.11} and  \eqref{E2.12} hold. For the functions $\hat{w}^{\kappa,i}$ and $p^{\kappa}_i$, the existence comes from  results on the inhomogeneous heat equation in the whole space together with estimates  established earlier for the terms:
$$
v\cdot\nabla w^{\kappa} + \frac{1}{2}{\rm div}\,(v\otimes w^{\kappa}),\quad \frac{1}{2} w^{\kappa}\cdot\nabla v\quad\mbox{and}\quad v\cdot\nabla v.
$$
Therefore, we skip the details for the sake of brevity.

Now for the construction of $w^{\kappa,1}$, we go back to the function $q^{\kappa}$ such that
$$-\Delta q^{\kappa} = {\rm div}\,(w^{\kappa}\cdot \nabla w^{\kappa} + \frac{w^{\kappa}}{2}{\rm div}\, w^{\kappa}), $$
and  introduce the function $Q^{\kappa} := \nabla q^{\kappa} + w^{\kappa}\cdot \nabla w^{\kappa} + \frac{1}{2}w^{\kappa}{\rm div}\, w^{\kappa}$ such that ${\rm div}\, Q^{\kappa} = 0$ and the following estimates holds (see \eqref{E2.22})
$$
\|Q^{\kappa}\|_{L_{\frac{6}{5},2}(Q_T)} + \|\nabla q^{\kappa}\|_{L_{\frac{6}{5},2} (Q_T)} < \infty,
$$
for all $T>0$.

Now, let us look for the solution to the Cauchy problem (\ref{E2.13}) in the form  $w^{\kappa,1}=w^{\kappa,11}+w^{\kappa,12}$, where
$$
\sup_{0<t<T}\|w^{\kappa,1i}(\cdot,t)\|^2_{L_2(\mathbb{R}^3)} + \int_0^T\int_{\mathbb{R}^3}|w^{\kappa,1i}(x,t)|^2 dx dt < \infty,\quad (i=1,2)
$$
for all $T>0$ and
\begin{equation*}
    \left\{
    \begin{gathered}
    \partial_t w^{\kappa,11} - \Delta w^{\kappa,11} =  Q^{\kappa} \mbox{ in }Q_+ \\
    w^{\kappa,11}|_{t=0} = 0 \quad \mbox{in }\mathbb{R}^3,
    \end{gathered}
    \right.\mbox{ and }
    \left\{
    \begin{gathered}
    \partial_t w^{\kappa,12} - (1+\kappa) \Delta w^{\kappa,12} =  \nabla q^{\kappa} \mbox{ in }Q_+ \\
    w^{\kappa,12}|_{t=0} = 0 \quad \mbox{in }\mathbb{R}^3.
    \end{gathered}
    \right.
\end{equation*}
Now, notice that ${\rm div}\, w^{\kappa,11} = $ and ${\rm curl}\, w^{\kappa,12} = 0$ in $Q_+$ and $\Delta w^{\kappa,12} = \nabla{\rm div}\, w^{\kappa,12}$ 
 and we are done by setting $w^{\kappa,1} := w^{\kappa,11} + w^{\kappa,12}$. This concludes the proof of (\ref{important}).

\begin{cor}\label{Coro1}
Let $U^{\kappa}$ and $W^{\kappa}$ be defined  in the proof of the previous theorem, see Part I. Then, the following estimates hold
\[
\|W^{\kappa}\|^2_{L_2(\mathbb{R}^3)} + \|\nabla W^{\kappa}\|^2_{L_2(\mathbb{R}^3)} + \kappa\|{\rm div}\, U^{\kappa}\|^2_{L_2(\mathbb{R}^3)} \leq c\left( \|u_0\|^2_{L^{3,\infty}(\mathbb{R}^3)} + \|u_0\|^4_{L^{3,\infty}(\mathbb{R}^3)} \right),
\]
and
$$
\|\kappa \,{\rm div}\, U^{\kappa}\|^2_{L_2(\mathbb{R}^3)} \leq c\left( \|u_0\|^2_{L^{3,\infty}(\mathbb{R}^3)} + \|u_0\|^4_{L^{3,\infty}(\mathbb{R}^3)} \right)^2,
$$
where $c>0$ is a universal constant.
\end{cor}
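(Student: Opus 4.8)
The plan is to read off the stationary estimates from the two space-time bounds already at hand, namely the global energy estimate (\ref{E1.21}) and inequality (\ref{important}), by invoking the self-similar structure. Recall that in Part~I the solution $u^\kappa$ is the forward self-similar one, and that ${\rm div}\, u_0 = 0$ forces ${\rm div}\, v = 0$; hence $w^\kappa = u^\kappa - v$ is itself forward self-similar, $w^\kappa(x,t) = t^{-1/2}W^\kappa(x/\sqrt t)$ with profile $W^\kappa = U^\kappa - V$, and consequently ${\rm div}\, W^\kappa = {\rm div}\, U^\kappa$.

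First I would record the elementary scaling identities coming from the substitution $y = x/\sqrt t$: for every $t>0$,
$$
\|w^\kappa(\cdot,t)\|_{L_2(\mathbb R^3)}^2 = \sqrt t\,\|W^\kappa\|_{L_2(\mathbb R^3)}^2,\qquad \int_{\mathbb R^3}|\nabla w^\kappa(x,t)|^2\,dx = \frac{1}{\sqrt t}\,\|\nabla W^\kappa\|_{L_2(\mathbb R^3)}^2,
$$
and likewise $\int_{\mathbb R^3}|{\rm div}\, w^\kappa(x,t)|^2\,dx = t^{-1/2}\|{\rm div}\, U^\kappa\|_{L_2(\mathbb R^3)}^2$; integrating the last two identities over $]0,T[$ produces a harmless factor $2\sqrt T$ on the left.

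Next I would substitute these identities into the global bounds. Inserting them into (\ref{E1.21}) and cancelling the common weight $t^{1/2}$ on both sides gives $\|W^\kappa\|_{L_2(\mathbb R^3)}^2 + \|\nabla W^\kappa\|_{L_2(\mathbb R^3)}^2 + \kappa\|{\rm div}\, U^\kappa\|_{L_2(\mathbb R^3)}^2 \le c(\|u_0\|_{L^{3,\infty}(\mathbb R^3)}^2 + \|u_0\|_{L^{3,\infty}(\mathbb R^3)}^4)$, which is the first claim. Likewise, writing $\int_0^T\!\!\int_{\mathbb R^3}|\kappa{\rm div}\, w^\kappa|^2\,dx\,ds = 2\sqrt T\,\|\kappa{\rm div}\, U^\kappa\|_{L_2(\mathbb R^3)}^2$ and dividing (\ref{important}) by $\sqrt T$ yields the second claim; since the constants $c_0$ in (\ref{E1.21}) and $c$ in (\ref{important}) are independent of $\kappa$, so are the resulting ones.

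There is essentially no obstacle here — it is a pure rescaling argument. The only points worth a line of justification are that (\ref{E1.21}) and (\ref{important}) indeed apply to this particular self-similar solution (inequality (\ref{important}) was established for it directly in Parts~I--III, and (\ref{E1.21}) holds by construction), and the trivial fact ${\rm div}\, V = 0$, which makes the divergence terms match up.
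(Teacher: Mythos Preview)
Your argument is correct and is precisely the intended one: the paper states the corollary without a separate proof, relying on exactly this rescaling of (\ref{E1.21}) and (\ref{important}) via the self-similar form $w^\kappa(x,t)=t^{-1/2}W^\kappa(x/\sqrt t)$ together with ${\rm div}\,V=0$. Your remark that (\ref{E1.21}) applies because the self-similar $u^\kappa$ is a global weak $L^{3,\infty}$-solution, and that (\ref{important}) was proved for it in Parts~I--III, is the right justification.
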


Now, our goal is to establish a uniform $H^2$-estimate (\ref{H2est})  (with respect to $\kappa$) for the forward self-similar profile $U^{\kappa}$. We recall once again that:
\begin{equation*}
    \begin{gathered}
    U^{\kappa} = V + W^{\kappa},~{\rm div}\, u_0 = 0,\\
    |\partial^{\alpha}W^{\kappa}(x)| \leq \frac{c(\kappa,u_0)}{(1 + |x|)^{3+|\alpha|}},\quad |\partial^{\alpha}V(x)| \leq \frac{c(u_0)}{(1 + |x|)^{1+|\alpha|}}.
    \end{gathered}
\end{equation*}
The above decay estimates make legitimate the upcoming integration by part. Let us also recall the following  known formulae:
\begin{equation}\label{Equ1.2}
    \begin{gathered}
    v\cdot\Delta v = -|\nabla v|^2 + \Delta\frac{|v|^2}{2},\quad \Delta v= \nabla {\rm div}\, v - {\rm curl}\,({\rm curl}\, v)\\
    {\rm div}\,(x\cdot\nabla v) = x\cdot \nabla {\rm div}\, v + {\rm div}\, v,\quad {\rm curl}\,(x\cdot \nabla v) =  x\cdot\nabla {\rm curl}\, v + {\rm curl}\, v.
    \end{gathered}
\end{equation}

We find from \eqref{E1.5} the following equation for $W^{\kappa}$:
\begin{multline}\label{Equ1.3}
    -\Delta W^{\kappa} - \kappa\nabla {\rm div}\, W^{\kappa} - \frac{x}{2}\cdot \nabla W^{\kappa} - \frac{W^{\kappa}}{2} + U^{\kappa}\cdot \nabla W^{\kappa} + \frac{U^{\kappa}}{2}{\rm div}\, W^{\kappa}\\ + U^{\kappa}\cdot \nabla V = 0\quad\mbox{in }\mathbb{R}^3.
\end{multline}
Now, let us multiply system \eqref{Equ1.3} by $\Delta W^{\kappa}$ and integrate over the whole space to obtain (thanks also to the formulas \eqref{Equ1.2}):
\begin{multline}\label{Equ1.4}
    \int_{\mathbb{R}^3}|\Delta W^{\kappa}|^2 dx + \kappa\int_{\mathbb{R}^3}|\nabla {\rm div}\, W^{\kappa}|^2 dx \leq \kappa\int_{\mathbb{R}^3} \nabla{\rm div}\, W^{\kappa}\cdot {\rm curl}\,({\rm curl}\, W^{\kappa}) dx\\ - \frac{1}{2}\int_{\mathbb{R}^3}(x\cdot \nabla W^{\kappa}) \cdot \nabla {\rm div}\, W^{\kappa} dx + \frac{1}{2}\int_{\mathbb{R}^3}(x\cdot \nabla W^{\kappa}) \cdot {\rm curl}\,({\rm curl}\, W^{\kappa}) dx\\ + \frac{1}{2}\int_{\mathbb{R}^3}|\nabla W^{\kappa}|^2 dx - \frac{1}{4}\int_{\mathbb{R}^3}\Delta |W^{\kappa}|^2 dx + c\||U^{\kappa}| |\nabla W^{\kappa}|\|_{L_2(\mathbb{R}^3)} \|\nabla^2 W^{\kappa}\|_{L_2(\mathbb{R}^3)}\\ + \|U^{\kappa}\|_{L_4(\mathbb{R}^3)} \|\nabla V\|_{L_4(\mathbb{R}^3)} \|\nabla W^k\|_{L^{2}(\mathbb{R}^3)}.
\end{multline}
Next, we have
$$
\||U^{\kappa}| |\nabla W^{\kappa}|\|_{L_2(\mathbb{R}^3)}
     \leq c\|U^{\kappa}\|
     _{L_{6}(\mathbb{R}^3)}\|\nabla W^{\kappa}\|_{L_3(\mathbb{R}^3)}$$$$
     \leq c(\|V\|_{L_{6}(\mathbb{R}^3)}+\|\nabla W^k\|_{L^{2}(\mathbb{R}^3)})\|\nabla W^k\|^\frac 12_{L^{2}(\mathbb{R}^3)}  \|\nabla^2 W^k\|^\frac 12_{L^{2}(\mathbb{R}^3)} \leq       $$
     $$\leq c(\|u_0\|_{L^{3,\infty}(\mathbb{R}^3)})\|\nabla^2 W^k\|^\frac 12_{L^{2}(\mathbb{R}^3)},$$
     $$
    \|\nabla V\|_{L_4(\mathbb{R}^3)} \leq c\|u_0\|_{L^{3,\infty}(\mathbb{R}^3)},
   $$
   and
   $$\|\nabla^2 W^{\kappa}\|_{L_2(\mathbb{R}^3)} \leq c\|\Delta W^{\kappa}\|_{L_2(\mathbb{R}^3)}.$$
   Using inequality \eqref{Equ1.4}, successive integration by parts, the above estimates and the energy estimates  for $W^{\kappa}$, we find
\begin{equation}
    \int_{\mathbb{R}^3}|\nabla^2 W^{\kappa}|^2 dx + \kappa\int_{\mathbb{R}^3}|\nabla {\rm div}\, W^{\kappa}|^2 dx \leq c(\|u_0\|_{L^{3,\infty}(\mathbb{R}^3)}).
\end{equation}

Finally, let us establish the uniform $L_2$-estimate for $\kappa \nabla {\rm div}\, W^{\kappa}$. To achieve this, we set
\[
F^{\kappa} := -\Delta W^{\kappa} - \frac{W^{\kappa}}{2} + U^{\kappa}\cdot \nabla W^{\kappa} + \frac{U^{\kappa}}{2}{\rm div}\, W^{\kappa}\\ + U^{\kappa}\cdot \nabla V,
\]
and see from the above computations that
\[
\|F^{\kappa}\|_{L_2(\mathbb{R}^3)} \leq c(\|u_0\|_{L^{3,\infty}(\mathbb{R}^3)}).
\]
Now, writing
\[
\kappa\nabla{\rm div}\, W^{\kappa} = -\frac{x}{2}\cdot \nabla W^{\kappa} + F^{\kappa},
\]
we get
\begin{align*}
    \phantom{{}\leq{}}
    \kappa^2\int_{\mathbb{R}^3}|\nabla {\rm div}\, W^{\kappa}|^2 dx &\leq -\frac{1}{2}\int_{\mathbb{R}^3}(x\cdot \nabla W^{\kappa})\cdot \kappa \nabla {\rm div}\, W^{\kappa} dx + \|F^{\kappa}\|_{L_2(\mathbb{R}^3)}\|\kappa \nabla {\rm div}\, W^{\kappa}\|_{L_2(\mathbb{R}^3)}\\
    &\leq -\frac{\kappa}{4}\int_{\mathbb{R}^3}({\rm div}\, W^{\kappa})^2 dx + \|F^{\kappa}\|_{L_2(\mathbb{R}^3)}\|\kappa \nabla {\rm div}\, W^{\kappa}\|_{L_2(\mathbb{R}^3)}.
\end{align*}
Using Cauchy's inequality, we obtain that
\begin{equation}
    \kappa^2\int_{\mathbb{R}^3}|\nabla {\rm div}\,W^{\kappa}|^2 dx \leq c(\|u_0\|_{L^{3,\infty}(\mathbb{R}^3)}).
\end{equation}

\setcounter{equation}{0}
\section{Proof of Theorem \ref{Convergence} }
From Theorem \ref{UniformEstimates}, it follows that there exists  subsequence (still denoted in the same way as $W^\kappa$) such that
\begin{equation}\label{E1.241}
    W^\kappa\rightharpoonup  W,\qquad \nabla W^\kappa\rightharpoonup \nabla W, \qquad\nabla^2 W^\kappa\rightharpoonup\nabla^2 W  \end{equation}
and
\begin{equation}
	\label{pressureconvergence2}
	\kappa{\rm div}\,w^k\rightharpoonup P\qquad \kappa\nabla {\rm div}\,w^k\rightharpoonup\nabla P\end{equation}
in $L_2(\mathbb R^3)$. We also can state
that

\begin{equation}\label{conv1}
	W^k\to W
\end{equation}
in $L_{4,loc}(\mathbb R^3)$ and a.e. in $\mathbb R^3$, and
\begin{equation}\label{maxbound}
\sup\limits_\kappa\sup_{x\in \mathbb R^3}|W^k(x)|<\infty.	
\end{equation}
Having the above convergence, it is easy to show that the limit functions $U$ and $P$ satisfy the profile equations (\ref{E1.8}).

Now, let us justify strong convergence. Let
 $\bar{U}^{\kappa} := U^{\kappa} - U = W^{\kappa} - W$. Next, we get from (\ref{Toy-Mod1}) and (\ref{E1.8})
\begin{multline}\label{Equ2.3}
    -\Delta \bar{U}^{\kappa} - \kappa \nabla {\rm div}\, \bar{U}^{\kappa} + \nabla P + U^{\kappa}\cdot \nabla \bar{U}^{\kappa} + \frac{\bar{U}^{\kappa}}{2}{\rm div}\, U^{\kappa} + \frac{U}{2}{\rm div}\, U^{\kappa}\\+ \bar{U}^{\kappa}\cdot \nabla U - \frac{x}{2}\cdot \nabla \bar{U}^{\kappa} - \frac{1}{2}\bar{U}^{\kappa} = 0\quad\mbox{in }\mathbb{R}^3.
\end{multline}
Multiplying the previous equation by $\bar{U}^{\kappa}$ and integrating on the whole of $\mathbb{R}^3$, we obtain (the latter can be verified by suitable cut-off and passing to the limit with the help of (\ref{E1.241})  and (\ref{pressureconvergence2})) that
\begin{multline*}
    \int_{\mathbb{R}^3}|\nabla \bar{U}^{\kappa}|^2 dx + \kappa\int_{\mathbb{R}^3}|{\rm div}\, \bar{U}^{\kappa}|^2 dx + \frac{1}{4}\int_{\mathbb{R}^3}|\bar{U}^{\kappa}|^2 dx \leq \frac{\|P\|_{L_2(\mathbb{R}^3)}}{\kappa}\|\kappa {\rm div}\, U^{\kappa}\|_{L_2(\mathbb{R}^3)}\\ + \frac{1}{2\kappa}\|U\|_{L_4(\mathbb{R}^3)}\|\bar{U}^{\kappa}\|_{L_4(\mathbb{R}^3)}\|\kappa {\rm div}\, U^{\kappa}\|_{L_2(\mathbb{R}^3)} + \int_{\mathbb{R}^3}|\bar{U}^{\kappa}|^2 |\nabla U| dx.
\end{multline*}

Since $L_4$-norm of $U$ and $\bar U^\kappa$
are uniformly bounded, it is enough to show that the last term on the right hand side of the above inequality tends to zero as $\kappa\to \infty$. Indeed,
$$\int_{\mathbb{R}^3}|\bar{U}^{\kappa}|^2 |\nabla U| dx\leq \Big(\int_{\mathbb R^3}|\bar{U}^{\kappa}|^2\Big)^\frac 12\Big(\int_{\mathbb R^3}|\bar{U}^{\kappa}|^2|\nabla U|^2\Big)^\frac 12.$$
The first factor on the right hand of the latter inequality is bounded while the second one tends to zero by Lebesgue theorem, see
(\ref{conv1}) and (\ref{maxbound}).

Now, let us notice the following identity
$$\sup_{0< \tau\leq t}\int_{\mathbb R^3}|u^\kappa(x,\tau)-u(x,\tau)|^2dx+\int^t_0\int_{\mathbb R^3}|\nabla(u^\kappa-u)|^2dxd\tau=
$$
$$=\sqrt t \int_{\mathbb R^3}(|\bar U^\kappa|^2+2|\nabla \bar U^\kappa|^2)dx,
$$ where $u(x,t)=\frac 1{\sqrt t}U(\frac x{\sqrt t})$.
 It is not so difficult to deduce from the above identity that in fact $u$ is a global weak $L^{3,\infty}$-solution to the Navier-Stokes system. To this end, one needs to take into account semigroup estimates and the above strong convergence.

 Now, since  $u(x,t)=\frac 1{\sqrt t}U(\frac x{\sqrt t})$ is a global weak $L^{3,\infty}$-solution to the Navier-Stokes system, estimates (\ref{DecayNavierStokes}) are true as well. It is shown in \cite{Jia14}.

 To show the strong convergence of $\nabla^2 \bar U^\kappa$ in $L_2(\mathbb R^3)$ to zero, it is sufficient to multiply equation
(\ref{Equ2.3}) by $\Delta \bar U^\kappa$ and use the strong convergence of $\bar U^\kappa$ and $\nabla \bar U^\kappa$, and the weak convergence of  $\nabla^2\bar U^\kappa$ in $L_2(\mathbb R^3)$ to zero.
\setcounter{equation}{0}
\section{Appendix I: Existence of Global Weak $L^{3,\infty}$- solutuions}

In this section, we are going to prove  Theorem \ref{existenceglobalweaksol}.
Let us start with recording the known fact about the decomposition in Lorentz spaces; for a proof, we refer to Lemma 3.1 in \cite{McCor13}.
\begin{lemma}\label{LA.1}
Take $1<t<r<s<\infty$, and suppose that $g\in L^{r,\infty}(\mathbb{R}^3)$. Then for any $N>0$, set $\bar{g}^N := g \mathbb{1}_{|g|\leq N} $ and $\hat{g}^N := g - \bar{g}^N$. Then
\begin{equation*}
    \|\bar{g}^N\|^s_{L_s(\mathbb{R}^3)} \leq \frac{s}{s - r}N^{s-r}\|g\|^r_{L^{r,\infty}(\mathbb{R}^3)} - N^s \left|\{x\in \mathbb{R}^3:|g(x)|>N\}\right|
\end{equation*}
and
\begin{equation*}
    \|\hat{g}^N\|^t_{L_t(\mathbb{R}^3)} \leq \frac{r}{r - t}N^{t-r}\|g\|^r_{L^{r,\infty}(\mathbb{R}^3)}.
\end{equation*}
\end{lemma}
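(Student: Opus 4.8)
The plan is to run a layer-cake (distribution-function) computation. Writing $\lambda_g(\alpha):=|\{x\in\mathbb R^3:|g(x)|>\alpha\}|$ for the distribution function of $g$, the weak-$L^r$ hypothesis enters only through the elementary bound $\lambda_g(\alpha)\le\|g\|_{L^{r,\infty}(\mathbb R^3)}^r\,\alpha^{-r}$, valid for every $\alpha>0$ (this is immediate from the definition of $\|\cdot\|_{L^{r,\infty}}$). Both truncations have distribution functions that are explicit piecewise modifications of $\lambda_g$, so the two estimates reduce to bounding two power-law integrals.

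For the first inequality I would note that $|\bar g^N|=|g|\,\mathbb{1}_{\{|g|\le N\}}$, hence $\lambda_{\bar g^N}(\alpha)=\lambda_g(\alpha)-\lambda_g(N)$ for $0<\alpha<N$ and $\lambda_{\bar g^N}(\alpha)=0$ for $\alpha\ge N$. The layer-cake formula then gives
$$
\|\bar g^N\|_{L_s(\mathbb R^3)}^s=s\int_0^N\alpha^{s-1}\bigl(\lambda_g(\alpha)-\lambda_g(N)\bigr)\,d\alpha=s\int_0^N\alpha^{s-1}\lambda_g(\alpha)\,d\alpha-N^s\lambda_g(N),
$$
and inserting $\lambda_g(\alpha)\le\|g\|_{L^{r,\infty}}^r\alpha^{-r}$ into the remaining integral — where the condition $s>r$ is exactly what makes $\int_0^N\alpha^{s-1-r}\,d\alpha=\tfrac{1}{s-r}N^{s-r}$ finite at the origin — produces the claimed bound $\tfrac{s}{s-r}N^{s-r}\|g\|_{L^{r,\infty}}^r-N^s\lambda_g(N)$. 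Symmetrically, $|\hat g^N|=|g|\,\mathbb{1}_{\{|g|>N\}}$, so $\lambda_{\hat g^N}(\alpha)=\lambda_g(N)$ for $0<\alpha\le N$ and $\lambda_{\hat g^N}(\alpha)=\lambda_g(\alpha)$ for $\alpha>N$, whence
$$
\|\hat g^N\|_{L_t(\mathbb R^3)}^t=N^t\lambda_g(N)+t\int_N^\infty\alpha^{t-1}\lambda_g(\alpha)\,d\alpha.
$$
Bounding $\lambda_g(N)\le\|g\|_{L^{r,\infty}}^rN^{-r}$ in the first term and $\lambda_g(\alpha)\le\|g\|_{L^{r,\infty}}^r\alpha^{-r}$ in the integral — where now $t<r$ is what makes $\int_N^\infty\alpha^{t-1-r}\,d\alpha=\tfrac{1}{r-t}N^{t-r}$ converge at infinity — yields $\bigl(1+\tfrac{t}{r-t}\bigr)\|g\|_{L^{r,\infty}}^rN^{t-r}=\tfrac{r}{r-t}\|g\|_{L^{r,\infty}}^rN^{t-r}$, which is the second claim.

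There is no genuine obstacle here; the only points deserving attention are the bookkeeping of the truncated distribution functions and the observation that the hypothesis $1<t<r<s<\infty$ serves precisely to keep the two power-law integrals finite, one near the origin and one near infinity. Since the argument is this short, I would include it in full rather than merely cite Lemma 3.1 of \cite{McCor13}.
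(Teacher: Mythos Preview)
Your argument is correct and complete; the layer-cake computation you outline is exactly the standard proof, and every step checks out, including the identification of the truncated distribution functions and the role of the exponent ordering $t<r<s$ in making the two integrals converge. The paper itself does not supply a proof at all---it simply records the lemma as known and refers to Lemma~3.1 of \cite{McCor13}---so your write-up fills in precisely what the authors chose to outsource.
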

We will only justify the estimate \eqref{E1.21} of the theorem because once we get this as a priori estimate, the machinery used to prove existence is fairly standard (see \cite{Bark18} for instance).

The proof is divided into two steps:

\paragraph{Step I.} Let $u^{\kappa}(x,t) = S_{\kappa}(t)u_0 + w^{\kappa}$ be a global weak $L^{3,\infty}-$solution to \eqref{Toy-Mod1} with initial data $u_0\in L^{3,\infty}(\mathbb{R}^3)$. We apply Lemma \ref{LA.1} to get $u_0 = \bar{u}_0^N + \hat{u}_0^N$ and we introduce the following functions
\begin{equation}\label{A.1}
    \bar{v}^N(x,t) := S_{\kappa}(t)\bar{u}_0^N(x),
\end{equation}

\begin{equation}\label{A.2}
    \hat{v}^N(x,t) := S_{\kappa}(t)\hat{u}_0^N(x)
\end{equation}
and
\begin{equation}\label{A.3}
    w^N(x,t) := w^{\kappa}(x,t) + \hat{v}^N(x,t) (= u^{\kappa}(x,t) - \bar{v}^N(x,t)),
\end{equation}
for all $(x,t)\in Q_+$ (here we omit the dependence of $\bar{v}^N$, $\hat{v}^N$ and $w^N$ with respect to $\kappa$ just for the sake of simplicity). By assumption and taking for instance $r=3,t=2$ in Lemma \ref{LA.1}, we have that
\[
\sup_{0<s<t}\|w^N(\cdot,s)\|_{L_2(\mathbb{R}^3)}^2 + \int_0^t\int_{\mathbb{R}^3}|\nabla w^N(x,s)|^2dx ds < \infty\quad\forall t>0,
\]
\begin{equation}\label{A.4}
    \lim_{t\to 0^+}\|w^N(\cdot,t) - \hat{u}_0^N\|_{L_2(\mathbb{R}^3)} = 0,
\end{equation}
and
\begin{equation}\label{A.5}
    \partial_t w^N - \Delta w^N - \kappa \nabla {\rm div}\, w^N + u^{\kappa}\cdot\nabla u^{\kappa} + \frac{u^{\kappa}}{2}{\rm div}\, u^{\kappa} = 0\mbox{ in }Q_+.
\end{equation}
From the construction of the semigroup $S_{\kappa}(t)$ in Proposition \ref{Prop1.1}, it follows that if $a\in L_s(\mathbb{R}^3)$ ($1<s<\infty$) then $\|S_{\kappa}(t)a - a\|_{L_s(\mathbb{R}^3)}\to 0$ as $t\to 0^+$; which gives us \eqref{A.4}.

Let $0\leq \varphi \in C^{\infty}_0(B)$ be such that $\varphi \equiv 1$ in $B(1/2)$ and $\varphi\equiv 0$ in $B\setminus B(3/4)$; we define, for every $R>0$, $\varphi_R(x) := \varphi(x/R)$. Now, from equation \eqref{A.5} and the definition \eqref{A.3}, we can get that for all $t>0$
\begin{multline}\label{A.6}
    \frac{1}{2}\int_{B(R)}|w^N(x,t)|^2 \varphi_R(x) dx + \int_0^t \int_{B(R)}|\nabla w^N|^2\varphi_R dx ds\\ + \kappa\int_0^t \int_{B(R)}|{\rm div}\, w^{\kappa}|^2 \varphi_R dx ds = \frac{1}{2}\int_{B(R)}|\hat{u}^N_0(x)|^2 \varphi_R dx + \frac{1}{2}\int_0^t\int_{B(R)}|w^N|^2 \Delta \varphi_R dx ds\\ - \kappa \int_0^t\int_{B(R)}{\rm div}\, w^N w^N\cdot \nabla \varphi_R dx ds + \frac{1}{2}\int_0^t\int_{B(R)}|w^N|^2 w^N\cdot\nabla \varphi_R dx ds\\
    +\int_0^t \int_{B(R)}\left((w^N\cdot\nabla w^N)\cdot \bar{v}^N-(\bar{v}^N\cdot \nabla w^N)\cdot w^N + \frac{1}{2}\bar{v}^N\cdot w^N{\rm div}\, w^N \right)\varphi_R dx ds\\ + \int_0^t \int_{B(R)} (\bar{v}^N\cdot\nabla w^N)\cdot \bar{v}^N\varphi_R dx ds + \int_0^t \int_{B(R)}w^N\cdot\nabla \varphi_R \bar{v}^N\cdot w^N dx ds\\ + \int_0^t \int_{B(R)} \bar{v}^N\cdot\nabla \varphi_R \bar{v}^N\cdot w^N dx ds;
\end{multline}
where, for simplicity, we write the right-hand side of the previous identity as follows
\[
\frac{1}{2}\int_{B(R)}|\hat{u}^N_0(x)|^2 \varphi_R dx + \sum_{k=1}^7 I_k(R).
\]
The aim now is to estimate the $I_k$'s and take the limit $R\to \infty$. To this end, we need the following known estimate:
\begin{equation}\label{A.7}
    \|a\|_{s,l,Q_T} \leq c(s,l) |a|_{2,Q_T},
\end{equation}
for $2\leq s \leq 6$ and $l$ satisfying
\[
\frac{3}{s} + \frac{2}{l} = \frac{3}{2};
\]
here $$|a|_{2,Q_T}=\left({\rm ess sup}_{0<t<T}\|a(\cdot,t)\|^2_{L_2(\Omega)} + \|\nabla a\|^2_{2,Q_T}\right)^\frac 12.$$

We have
\begin{multline*}
    I_1(R) + I_2(R) + I_3(R) \leq \frac{c t}{R^2}\sup_{0<s<t}\|w^N(\cdot,s)\|^2_{L_2(\mathbb{R}^3)}\\ + \frac{\kappa c t^{\frac{1}{2}}}{R}\sup_{0<s<t}\|w^N(\cdot,s)\|_{L_2(\mathbb{R}^3)}\left(\int_0^t\int_{\mathbb{R}^3}|\nabla w^N(x,s)|^2 dx ds\right)^{\frac{1}{2}}\\ + \frac{c t^{\frac{1}{2}}}{R}\left(\int_0^t\left(\int_{\mathbb{R}^3}|w^N(x,s)|^3 dx\right)^{\frac{4}{3}}ds \right)^{\frac{1}{2}} \to 0\mbox{ as }R\to \infty;
\end{multline*}
Next,
\[
I_4(R) \leq \frac{5}{2}\int_0^t\left(\int_{B(R)}|\nabla w^N|^2\varphi_R dx \right)^{\frac{1}{2}}\left(\int_{B(R)}|w^N\varphi^{\frac{1}{2}}_R|^3 dx\right)^{\frac{1}{3}}\left(\int_{\mathbb{R}^3}|\bar{v}^N|^6 dx\right)^{\frac{1}{6}}ds,
\]
but by interpolation, we find
\begin{align*}
\phantom{{}\leq{}}
    \|w^N\varphi^{\frac{1}{2}}\|_{L_3(B(R))} &\leq \|w^N\varphi^{\frac{1}{2}}\|^{\frac{1}{2}}_{L_2(B(R))}\|w^N\varphi^{\frac{1}{2}}\|^{\frac{1}{2}}_{L_6(B(R))}\\
    &\leq c \left(\int_{B(R)}|w^N|^2\varphi_R dx\right)^{\frac{1}{4}}\left[\left( \int_{B(R)}|\nabla w^N|^2\varphi_R dx \right)^{\frac{1}{4}}\right.\\&\mathrel{\phantom{=}}\left. + \left(\int_{B(R)}|\nabla \varphi_R^{\frac{1}{2}}|^2 |w^N|^2 dx\right)^{\frac{1}{4}} \right];
\end{align*}
consequently
\begin{align*}
    I_4(R) &\leq c \int_0^t\left(\int_{B(R)}|\nabla w^N|^2 \varphi_R dx\right)^{\frac{3}{4}}\left(\int_{B(R)} |w^N|^2\varphi_R dx \right)^{\frac{1}{4}}\left(\int_{\mathbb{R}^3}|\bar{v}^N|^6 dx\right)^{\frac{1}{6}}ds\\&\mathrel{\phantom{=}} + \frac{c}{R^{\frac{1}{2}}} \int_0^t\left(\int_{B(R)}|\nabla w^N|^2 \varphi_R dx\right)^{\frac{1}{2}}\left(\int_{B(R)} |w^N|^2dx\right)^{\frac{1}{2}}\left(\int_{\mathbb{R}^3}|\bar{v}^N|^6 dx\right)^{\frac{1}{6}}ds\\
    &\leq \epsilon_1\int_0^t\int_{B(R)}|\nabla w^N|^2\varphi_R dx + c(\epsilon_1)\int_0^t\left( \int_{\mathbb{R}^3}|\bar{v}^N|^6 dx\right)^{\frac{2}{3}}\left(\int_{B(R)}|w^N|^2\varphi_R dx \right)ds\\ &+ I^{(0)}_4(R),
\end{align*}
with $\epsilon_1>0$ and
\begin{align*}
    I^{(0)}_4(R) &:= \frac{c}{R^{\frac{1}{2}}} \int_0^t\left(\int_{B(R)}|\nabla w^N|^2 \varphi_R dx\right)^{\frac{1}{2}}\left(\int_{B(R)} |w^N|^2dx\right)^{\frac{1}{2}}\left(\int_{\mathbb{R}^3}|\bar{v}^N|^6 dx\right)^{\frac{1}{6}}ds\\ &\leq \frac{c t^{\frac{1}{2}}}{R^{\frac{1}{2}}}\|\bar{u}^N_0\|_{L_6(\mathbb{R}^3)}\sup_{0<s<t}\|w^N(\cdot,s)\|_{L_2(\mathbb{R}^3)}\left(\int_0^t\int_{\mathbb{R}^3}|\nabla w^N(x,s)|^2 dx ds\right)^{\frac{1}{2}}\\&\to 0\mbox{ as } R\to \infty,
\end{align*}
where Proposition \ref{Prop1.1} was used (with $s=s_1=6$) in the last inequality.

For $I_5(R)$, we have
\begin{align*}
    I_5(R) &\leq \epsilon_2 \int_0^t \int_{B(R)}|\nabla w^N|^2 \varphi_R dx + c(\epsilon_2)\int_0^t\int_{\mathbb{R}^3}|\bar{v}^N|^4dx ds\\
    &\leq \epsilon_2 \int_0^t \int_{B(R)}|\nabla w^N|^2 \varphi_R dx + c(\epsilon_2)N t \|u_0\|^3_{L^{3,\infty}(\mathbb{R}^3)},
\end{align*}
where we used Proposition \ref{Prop1.1} (with $s=s_1=4$) and Lemma \ref{LA.1} (with $s=4$ and $r=3$) in the last inequality.\\
Finally,
\begin{multline*}
    I_6(R) + I_7(R) \leq \frac{c t^{\frac{1}{4}}}{R^{\frac{1}{4}}}\|\bar{u}_0^N\|_{L_4(\mathbb{R}^3)}\left(\int_0^t\left(\int_{\mathbb{R}^3}|w^N|^4 dx\right)^{\frac{2}{3}}ds \right)^{\frac{3}{4}}\\ + \frac{c t}{R}\|\bar{u}_0^N\|^2_{L_4(\mathbb{R}^3)}\sup_{0<s<t}\|w^N\|_{L_2(\mathbb{R}^3)} \to 0 \mbox{ as }R\to \infty.
\end{multline*}
Summarising our efforts, we get (from \eqref{A.6}) that
\begin{multline*}
    \frac{1}{2}\int_{B(R)}|w^N(x,t)|^2 \varphi_R(x) dx + \int_0^t \int_{B(R)}|\nabla w^N|^2\varphi_R dx ds\\ + \kappa\int_0^t \int_{B(R)}|{\rm div}\, w^{\kappa}|^2\varphi_R dx ds \leq \frac{1}{2}\int_{B(R)}|\hat{u}^N_0(x)|^2 \varphi_R dx + (\epsilon_1 + \epsilon_2)\int_0^t \int_{B(R)}|\nabla w^N|^2\varphi_R dx ds\\ + c(\epsilon_1)\int_0^t\left( \int_{\mathbb{R}^3}|\bar{v}^N|^6 dx\right)^{\frac{2}{3}}\left(\int_{B(R)}|w^N|^2\varphi_R dx \right)ds + c(\epsilon_2)N t \|u_0\|^3_{L^{3,\infty}(\mathbb{R}^3)} + J(R),
\end{multline*}
with $J(R)\to 0$ as $R\to \infty$. Choosing suitably $\epsilon_1$ and $\epsilon_2$, and using Proposition \ref{Prop1.1} and Lemma \ref{LA.1}, in order to get
\[
\left( \int_{\mathbb{R}^3}|\bar{v}^N|^6 dx\right)^{\frac{2}{3}} \leq c N^2\|u_0\|^2_{L^{3,\infty}(\mathbb{R}^3)},
\]
we find that
\begin{multline*}
    \int_{B(R/2)}|w^N(x,t)|^2 dx + \int_0^t \int_{B(R/2)}|\nabla w^N|^2 dx ds\\ + \kappa\int_0^t \int_{B(R/2)}|{\rm div}\, w^{\kappa}|^2 dx ds \leq \int_{B(R)}|\hat{u}^N_0(x)|^2 dx +\\ c\left(N^2\|u_0\|^2_{L^{3,\infty}(\mathbb{R}^3)}\int_0^t\int_{B(R)}|w^N(x,s)|^2dx ds + Nt\|u_0\|^3_{L^{3,\infty}(\mathbb{R}^3)}\right) + 2 J(R).
\end{multline*}
Finally, taking the limit $R\to \infty$ in the above inequality, 
we get
\begin{multline}\label{A.8}
    \int_{\mathbb{R}^3}|w^N(x,t)|^2 dx + \int_0^t \int_{\mathbb{R}^3}|\nabla w^N|^2 dx ds\\ + \kappa\int_0^t \int_{\mathbb{R}^3}|{\rm div}\, w^{\kappa}|^2 dx ds \leq \int_{\mathbb{R}^3}|\hat{u}^N_0(x)|^2 dx +\\ c\left(N^2\|u_0\|^2_{L^{3,\infty}(\mathbb{R}^3)}\int_0^t\int_{\mathbb{R}^3}|w^N(x,s)|^2dx ds + Nt\|u_0\|^3_{L^{3,\infty}(\mathbb{R}^3)}\right),
\end{multline}
for all $t>0$. By Applying Gronwall's lemma 
to \eqref{A.8}, we obtain
\begin{equation}\label{A.9}
    \int_{\mathbb{R}^3}|w^N(x,t)|^2 dx \leq \left(\int_{\mathbb{R}^3}|\hat{u}^N_0(x)|^2 dx + N^{-1}\|u_0\|_{L^{3,\infty}(\mathbb{R}^3)}\right)\exp(cN^2t\|u_0\|^2_{L^{3,\infty}(\mathbb{R}^3)}),
\end{equation}
for all $t>0$. Now, by substituting \eqref{A.9} in \eqref{A.8} and using the fact that
\[
\int_{\mathbb{R}^3}|\hat{u}^N_0(x)|^2 dx \leq c N^{-1}\|u_0\|^3_{L^{3,\infty}(\mathbb{R}^3)},
\]
and, we obtain
\begin{multline}
    \int_{\mathbb{R}^3}|w^N(x,t)|^2 dx + \int_0^t \int_{\mathbb{R}^3}|\nabla w^N|^2 dx ds\\ + \kappa\int_0^t \int_{\mathbb{R}^3}|{\rm div}\, w^{\kappa}|^2 dx ds \leq c N^{-1}\left(\|u_0\|_{L^{3,\infty}(\mathbb{R}^3)} + \|u_0\|^3_{L^{3,\infty}(\mathbb{R}^3)} \right)\exp(cN^2t\|u_0\|^2_{L^{3,\infty}(\mathbb{R}^3)})\\ + c N t \|u_0\|^3_{L^{3,\infty}(\mathbb{R}^3)},
\end{multline}
for all $t>0$ and $N>0$.

\paragraph{Step II.}
Firstly, let us notice that
\[
\|\hat{v}^N(\cdot,t)\|^2_{L_2(\mathbb{R}^3)} + 2\int_0^t\int_{\mathbb{R}^3}\left[|\nabla \hat{v}^N|^2 + \kappa({\rm div}\, \hat{v}^N)^2\right]dx ds = \|\hat{u}_0^N\|^2_{L_2(\mathbb{R}^3)} (\leq c N^{-1}\|u_0\|^3_{L^{3,\infty}(\mathbb{R}^3)}),
\]
for all $t>0$. Secondly, going back to the definition of $w^N$ (see \eqref{A.3}) and using the above identity, we see that
\begin{multline}
    \int_{\mathbb{R}^3}|w^{\kappa}(x,t)|^2 dx + \int_0^t \int_{\mathbb{R}^3}|\nabla w^{\kappa}|^2 dx ds\\ + \kappa\int_0^t \int_{\mathbb{R}^3}|{\rm div}\, w^{\kappa}|^2 dx ds \leq c N^{-1}\left(\|u_0\|_{L^{3,\infty}(\mathbb{R}^3)} + \|u_0\|^3_{L^{3,\infty}(\mathbb{R}^3)} \right)\exp(cN^2t\|u_0\|^2_{L^{3,\infty}(\mathbb{R}^3)})\\ + c N t \|u_0\|^3_{L^{3,\infty}(\mathbb{R}^3)},
\end{multline}
for all $t>0$ and $N>0$. Selecting  now
\[
N = \frac{1}{\sqrt{2 c t \|u_0\|^2_{L^{3,\infty}(\mathbb{R}^3)}}},
\]
we finally find
\begin{multline}
    \int_{\mathbb{R}^3}|w^{\kappa}(x,t)|^2 dx + \int_0^t \int_{\mathbb{R}^3}|\nabla w^{\kappa}|^2 dx ds\\ + \kappa\int_0^t \int_{\mathbb{R}^3}|{\rm div}\, w^{\kappa}|^2 dx ds \leq c_0t^{\frac{1}{2}}\left(\|u_0\|^2_{L^{3,\infty}(\mathbb{R}^3)} + \|u_0\|^4_{L^{3,\infty}(\mathbb{R}^3)} \right),
\end{multline}
for all $t>0$ and a universal constant $c_0>0$. The same machinery works for system \eqref{Toy-Mod2}. This concludes the proof.

\setcounter{equation}{0}\section{Appendix II: Existence of Forward Self-Similar Solutions}
The following two results are needed in the proof of the existence of forward self-similar solutions for our models. Our setting is as follows: we take an initial data $u_0 = (u_0^1,u_0^2,u_0^3)$ which is a $(-1)$-homogeneous vector field such that $u_0|_{\partial B_1} \in C^{\infty}(\partial B_1)$. In this case, one can steadily show that
\[
|\partial^{\alpha} u_0(x)| \leq \frac{C(\alpha,u_0)}{|x|^{1+|\alpha|}},\quad \forall \alpha\in \mathbb{N}^3.
\]
We have the following decay estimate.
\begin{theorem}[A priori estimate for forward self-similar solutions]\label{ThmB.1}
Let $u_0$ as above and $u$ be a scale invariant global weak $L^{3,\infty}-$solution to system \eqref{Toy-Mod1} or \eqref{Toy-Mod2}. Then, the solution profile $U(\cdot) (:=u(\cdot,1))$  belongs to $C^{\infty}(\mathbb{R}^3)$ and
\[
|\partial^{\alpha}(U - S_{\kappa}(1)u_0)(x)| \leq \frac{C(\alpha,\kappa,u_0)}{(1 + |x|)^{3 + |\alpha|}},
\]
for all $\alpha\in \mathbb{N}^3$ (with $|\alpha| = \alpha_1 + \alpha_2 + \alpha_3$).
\end{theorem}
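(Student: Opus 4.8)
\emph{Plan.} The strategy is to convert the desired decay of the profile at spatial infinity into local regularity of the evolution up to the initial time, by means of the self‑similar scaling, and then to invoke a local‑in‑space smoothing statement near $t=0$ for the parabolic systems \eqref{Toy-Mod1}, \eqref{Toy-Mod2}, in the spirit of \cite{Jia14}. Write $v^\kappa(x,t)=S_\kappa(t)u_0(x)$ and $w=u-v^\kappa$. Since $u_0$ is $(-1)$‑homogeneous, both $u$ and $v^\kappa$ are scale invariant, hence so is $w$, and $w(x,t)=R^{-1}w(x/R,t/R^2)$ for every $R>0$. Differentiating in $x$ and putting $t=1$, this gives, for every $x_0$ with $R:=|x_0|\ge2$,
\begin{equation}\label{B-scaling}
\partial^\alpha W^\kappa(x_0)=R^{-1-|\alpha|}\big(\partial^\alpha_y w\big)\big(x_0/R,\,R^{-2}\big),\qquad\text{where }W^\kappa:=U-S_\kappa(1)u_0\ \text{ and }\ |x_0/R|=1 .
\end{equation}
Consequently it suffices to establish the following local bound on a neighbourhood $\mathcal U$ of $\partial B_1$ that stays away from the origin:
\begin{equation}\label{B-local}
|\partial^\alpha_y w(z,\tau)|\le C(\alpha,\kappa,u_0)\,\tau\qquad\text{for all }z\in\mathcal U,\ \tau\in\,]0,\tfrac14] .
\end{equation}
Indeed \eqref{B-scaling}--\eqref{B-local} yield $|\partial^\alpha W^\kappa(x_0)|\le C|x_0|^{-3-|\alpha|}$ for $|x_0|\ge2$, and on $\{|x|\le2\}$ the stated estimate reduces to interior regularity of $U$, treated below.

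\emph{Proof of \eqref{B-local}.} I would first show that $w$ extends to a function in $C^\infty(\mathcal U'\times[0,\tfrac12])$, for a slightly larger neighbourhood $\mathcal U'\supset\overline{\mathcal U}$ of $\partial B_1$, with $w(\cdot,0)=0$ on $\mathcal U'$. Granting this, \eqref{B-local} is immediate: $\sup_{\overline{\mathcal U}\times[0,1/2]}|\partial_t\partial^\alpha w|=:M_\alpha<\infty$, whence $|\partial^\alpha w(z,\tau)|=|\partial^\alpha w(z,\tau)-\partial^\alpha w(z,0)|\le M_\alpha\tau$. The smoothing up to $t=0$ is the substantive step. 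Its ingredients are: (i) $u_0$ is smooth and bounded on a neighbourhood of $\partial B_1$, so, by the structure of the Lam\'e kernel (see Proposition \ref{Prop1.1} and \eqref{solformula}), $v^\kappa$ is smooth and bounded on a neighbourhood of $\partial B_1$ for $t\in[0,1]$, with $v^\kappa(\cdot,0)=u_0$ there; (ii) by Theorem \ref{existenceglobalweaksol}, $\|w(\cdot,t)\|_{L_2(\mathbb R^3)}\to0$ as $t\to0^+$, together with the global bound \eqref{E1.21}; combining (i) and (ii), the scale‑invariant local energy of $u=v^\kappa+w$ over small parabolic cylinders centred at points near $\partial B_1$ at small times can be made as small as one wishes. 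An $\varepsilon$‑regularity criterion for \eqref{Toy-Mod1} (resp. \eqref{Toy-Mod2}) --- which follows from the local energy inequality in Definition \ref{globweak} by a Caffarelli--Kohn--Nirenberg/De Giorgi iteration, and which is simpler here than for the Navier--Stokes system because there is no nonlocal pressure to reconstruct (for \eqref{Toy-Mod2} the nonlinearity is moreover in divergence form, while for \eqref{Toy-Mod1} the extra term $\tfrac12 u\,{\rm div}\,u$ is absorbed, the term $\kappa({\rm div}\,u)^2$ carrying a favourable sign) --- then shows that $u$ is bounded on a neighbourhood of $\partial B_1$ for $t\in[0,\tfrac12]$. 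A routine parabolic bootstrap (treating the quadratic terms as a right‑hand side and using $L_p$‑ and then Schauder estimates for $\partial_t-\Delta-\kappa\nabla{\rm div}$) upgrades this to $u\in C^\infty(\mathcal U'\times[0,\tfrac12])$ with $u(\cdot,0)=u_0$; subtracting $v^\kappa$ gives $w\in C^\infty(\mathcal U'\times[0,\tfrac12])$ with $w(\cdot,0)=0$, as required. Note that the vanishing of $w$ at $t=0$ is essential: it is precisely the gain that improves the ``$|x|^{-1}$'' decay of $v^\kappa$ to the ``$|x|^{-3}$'' decay of $W^\kappa$.

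\emph{Global smoothness and the region $\{|x|\le2\}$.} For $t>0$ one has $v^\kappa(\cdot,t)\in C^\infty(\mathbb R^3)\cap L_\infty(\mathbb R^3)$ with $\|v^\kappa(\cdot,t)\|_{L_\infty(\mathbb R^3)}\le c\,t^{-1/2}\|u_0\|_{L^{3,\infty}(\mathbb R^3)}$ (an $L_\infty$ endpoint of Proposition \ref{Prop1.1}, obtained via the $L^{3/2,1}$‑bound for the Lam\'e kernel), while $w$ lies in the energy class \eqref{functionspaces}. Applying the same $\varepsilon$‑regularity criterion to $u=v^\kappa+w$ --- using that $\int_{B_r(x_0)}|u(\cdot,t_0)|^2\,dx\le c\,r\,\|u(\cdot,t_0)\|^2_{L^{3,\infty}(\mathbb R^3)}=c\,r\,\|U\|^2_{L^{3,\infty}(\mathbb R^3)}\to0$ as $r\to0$, uniformly in $x_0$ (including $x_0=0$) --- and bootstrapping as above yields $u\in C^\infty(\mathbb R^3\times\,]0,\infty[)$, hence $U=u(\cdot,1)\in C^\infty(\mathbb R^3)$ and the bound of the theorem holds trivially on any fixed ball.

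\emph{Main obstacle.} The heart of the argument is the local $\varepsilon$‑regularity/smoothing up to $t=0$ for the systems \eqref{Toy-Mod1}--\eqref{Toy-Mod2}: one must set up a Caffarelli--Kohn--Nirenberg‑type scheme adapted to these equations and verify that the relevant scale‑invariant local energy of $u$ is small near $\partial B_1\times\{0\}$, combining the smoothness of $u_0$ away from the origin with Proposition \ref{Prop1.1} and Theorem \ref{existenceglobalweaksol}. Because the weak solution need not be unique, the argument must --- and does --- rely only on the local energy inequality of Definition \ref{globweak} together with the global bound \eqref{E1.21}. The remaining ingredients (the scaling identity \eqref{B-scaling}, the parabolic bootstrap, and the interior regularity for $t>0$) are standard.
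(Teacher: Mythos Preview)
Your overall strategy is precisely that of the paper (and of \cite{Jia14}): convert the desired spatial decay of the profile into local-in-time regularity near $\partial B_1\times\{0\}$ via the self-similar scaling, invoke an $\varepsilon$-regularity/smoothing result up to $t=0$, bootstrap, and conclude. The scaling identity \eqref{B-scaling} and the reduction to \eqref{B-local} are exactly right, and your treatment of the compact region $\{|x|\le 2\}$ by interior elliptic/parabolic regularity matches the paper's.

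There is, however, a real gap at the ``substantive step''. You claim that from (i) the smoothness of $v^\kappa$ near $\partial B_1$ and (ii) $\|w(\cdot,t)\|_{L_2(\mathbb R^3)}\to 0$ together with \eqref{E1.21}, the scale-invariant local energy of $u$ over small parabolic cylinders touching $t=0$ can be made arbitrarily small. But \eqref{E1.21} gives only $\sup_{0<t<r^2}\|w(\cdot,t)\|_{L_2}^2+\int_0^{r^2}\|\nabla w\|_{L_2}^2\le c_0 r$, so the scale-invariant quantities (e.g.\ $r^{-2}\int_{Q_r}|w|^3$ or $r^{-1}\int_{Q_r}|\nabla w|^2$) are only \emph{bounded} by a constant depending on $\|u_0\|_{L^{3,\infty}}$, not small. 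For large data this does not place you below the $\varepsilon$-regularity threshold. The paper handles precisely this point differently: it first proves a proposition valid for \emph{any} global weak $L^{3,\infty}$-solution with locally H\"older initial data, by constructing a local-in-time smooth solution $a$ of the full nonlinear system with the localised (hence smooth, compactly supported) data $\chi u_0$, and then applying $\varepsilon$-regularity to the difference $u-a$. Because $a$ solves the nonlinear equation, the equation for $u-a$ has no zeroth-order forcing (unlike the equation for $w=u-v^\kappa$, which carries the $O(1)$ source $v^\kappa\cdot\nabla v^\kappa+\tfrac12 v^\kappa{\rm div}\,v^\kappa$), and $u-a$ has locally zero initial data; this is what yields genuine smallness of the scale-invariant local energy for small $r$, independently of the size of $u_0$.

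Your more streamlined route---working with $w=u-v^\kappa$ throughout, thereby avoiding the separate comparison of $u_0$ with $S_\kappa(1)u_0$---can in fact be salvaged for \emph{self-similar} solutions, but only by invoking the self-similarity of $w$ explicitly at this step: since $w(x,t)=t^{-1/2}W^\kappa(x/\sqrt t)$ with $W^\kappa\in H^1(\mathbb R^3)\subset L_3(\mathbb R^3)$ (by Corollary \ref{Coro1}), for $z_0\in\partial B_1$ one has $\int_{B_r(z_0)}|w(\cdot,t)|^3\,dx=\int_{B_{r/\sqrt t}(z_0/\sqrt t)}|W^\kappa|^3\,dy\le \int_{|y|>(1-r)/\sqrt t}|W^\kappa|^3\,dy$, which tends to $0$ uniformly in $t\le r^2$ as $r\to 0$; a similar argument controls $r^{-1}\int_{Q_r}|\nabla w|^2$. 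You should make this explicit, as the bare combination of (i) and (ii) does not suffice. Note also that this shortcut is specific to the self-similar setting, whereas the paper's proposition (via the local solution $a$) applies to arbitrary global weak $L^{3,\infty}$-solutions.
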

\begin{proof}
The proof of this theorem follows the same ideas as the proof of a similar result obtained in \cite{Jia14} for the incompressible Navier-Stokes system (the difference here being that the leading term in our system is the Lam\'e operator). It requires a lot of technical intermediate results which are not the point of this work. However, for the reader's convenience we outline the proof here; see \cite{Hou20} for a detailed proof. Following \cite{Jia14}, we should investigate local regularity of our solutions at the initial moment of time; this leads to the statement:


\begin{pro}Let $u_0 \in L^{3,\infty}(\mathbb{R}^3)$. Suppose in addition that $M:= \|u_0\|_{C^{\gamma}(B(2))}<\infty$. Then, there exists a positive time $T=T(\kappa,M,\|u_0\|_{L^{3,\infty}(\mathbb{R}^3)})$ such that any global weak $L^{3,\infty}-$solution $u$ to \eqref{Toy-Mod1} or \eqref{Toy-Mod2} satisfies:
$$
\|u\|_{C^{\gamma,\frac{\gamma}{2}}(\overline{B(1/4)}\times [0,T])} \leq C(\gamma,\kappa,M,\|u_0\|_{L^{3,\infty}(\mathbb{R}^3)}).
$$\end{pro}
The idea behind the proof of this statement is as follows: we localise the initial data $u_0$ in $B$ and then solves system \eqref{Toy-Mod1} or system \eqref{Toy-Mod2} for this localised initial data. The resulting solution $a$ is smooth and we proceed to show that the difference $u-a$, which is now null at $t=0$, remains regular near the initial time by $\epsilon-$regularity. We give the details of this claim's proof in \cite{Hou20}.

We know that
$
u(x,t) = \frac{1}{\sqrt{t}}U\left( \frac{x}{\sqrt{t}} \right)$, $t>0$,
where $U(\cdot) = S_{\kappa}(1)u_0 + W(\cdot,1)$, and estimates for $W$ are given by Corollary \ref{Coro1}. Thus, it's not too difficult to see that
\begin{equation}\label{B.1}
    \sqrt{t^*}\int_{B(1/\sqrt{t^*})} |U(y)|^2 dy + \sqrt{t^*}\int_{B(1/\sqrt{t^*})} |\nabla U(y)|^2 dy \leq C(\kappa, \|u_0\|_{C(\partial B)})(1 + \sqrt{t^*}),
\end{equation}
for all $t^*>0$.

On the other hand, for all $|x_0| = 8$, we have $u_0 \in C^{\infty}(B(x_0,4))$. Therefore, by the above proposition and by some simple bootstrapping arguments (with estimate \eqref{E1.21} at hand for the case of system \eqref{Toy-Mod1}), we have that there exists $T_2 = T_2(\kappa,u_0)>0$ such that
\begin{equation}\label{B.2}
    \|\partial_t \partial^{\alpha} u\|_{L_{\infty}(\overline{B(x_0,1/8)}\times [0,T_2])} \leq C(\alpha,\kappa,u_0),
\end{equation}
for all global weak $L^{3,\infty}-$solution $u$ to system \eqref{Toy-Mod1} or system \eqref{Toy-Mod2} with initial data $u_0$.
Since, for all $\lambda>0$, the scaled function $u^{\lambda}(x,t)= \lambda u(\lambda x,\lambda^2 t)$ is also a global weak $L^{3,\infty}-$solution to \eqref{Toy-Mod1} or \eqref{Toy-Mod2} with initial data $u_0$, then \eqref{B.2} holds also for $u^{\lambda}$ and we find
\[
|\lambda^{1+|\alpha|}\partial^{\alpha} u(\lambda x_0,\lambda^2 t) - \partial^{\alpha}u_0(x_0)| \leq C(\alpha,\kappa,u_0)t.
\]
Setting $y=x_0/\sqrt{t}$, and by using the homogeneity of $\partial^{\alpha} u_0$, we have:
\begin{equation}
    |\partial^{\alpha}(U - u_0)(y)| \leq \frac{C(\alpha,\kappa,u_0)}{|y|^{3+|\alpha|}},\quad \forall |y|> \frac{8}{\sqrt{T_2}}.
\end{equation}
Now, we choose $t^* = t^*(\kappa,u_0)$ in \eqref{B.1} sufficiently small so that
\begin{equation}\label{B.4}
    \int_{B(\frac{16}{\sqrt{T_2}})}\left(|U(y)|^2 + |\nabla U(y)|^2 dy \right) \leq C(\kappa,u_0).
\end{equation}
Since the profile $U$ satisfies either \eqref{E1.5} or \eqref{E1.6}, elliptic theory estimates give us:
\begin{equation}\label{B.5}
    \|U\|_{C^k(\overline{B(9/\sqrt{T_2})})} \leq C(\kappa,k,u_0)\quad (k=0,1,2\ldots)
\end{equation}
Going back to the definition of the semigroup $S_{\kappa}(t)$ in Proposition \ref{Prop1.1}, we obtain that
\[
\|\partial^{\alpha} S_{\kappa}(1)u_0\|_{L_{\infty}(\mathbb{R}^3)} \leq C(\alpha,\kappa,u_0)
\]
and
$$
|\partial^{\alpha}(S_{\kappa}(1)u_0 - u_0)(x)| \leq |\partial^{\alpha}(S(1)u_0^{(0)} - u_0^{(0)})(x)| + $$$$+|\partial^{\alpha}(S(1+\kappa)u_0^{(1)} - u_0^{(1)})(x)|\leq \frac{C(\alpha,\kappa,u_0)}{|x|^{3+|\alpha|}}
$$
by the known properties of the heat equation. This concludes the proof of the theorem.
\end{proof}

Another  important step in the proof of the main result of this section is as follows.
\begin{pro}[Decay for the linearly singularly forced Lam\'e system]\label{PropB.1}
Let $f \in C(\mathbb{R}^3)$ and suppose that $w \in L_{\infty}(0,T;L_{\gamma}(\mathbb{R}^3))$ for any $T>0$ and for some $\gamma \geq 1$ and moreover
\begin{equation}\label{B.6}
    \left\{
    \begin{gathered}
    \partial_t w - \Delta w - \kappa\nabla {\rm div}\, w = t^{-\frac{3}{2}}f(\frac{x}{\sqrt{t}})\quad \mbox{in } Q_+\\
    \lim_{t\to 0^+}\|w(\cdot,t)\|_{L_{\gamma}(\mathbb{R}^3)} = 0.
    \end{gathered}
    \right.
\end{equation}
Then:\\
    (i) If $\tilde{w}$ satisfies also the above conditions, then $\tilde{w} = w$. Consequently $w(\lambda x,\lambda^2 t) = w(x,t)$ for all $\lambda>0$.\\
    (ii) If $M:= \sup_{x\in \mathbb{R}^3}(1 + |x|)^3|f(x)| < \infty$, then by setting $W(x) = w(x,1)$ we get: $\|W\|_{C^{1 + \alpha}(B(R))}\leq c(\alpha,\kappa,R)M$ for $\alpha\in (0,1)$ and
    \[
    \sup_{x\in \mathbb{R}^3}\left[ (1+|x|)^2|W(x)| + (1+|x|)^3|\nabla W(x)| \right] \leq C(\kappa)M.
    \]
    (iii) Similarly, if $M:= \sup_{x\in \mathbb{R}^3}(1 + |x|)^4|f(x)| < \infty$, then
    \[
    \sup_{x\in \mathbb{R}^3}\left[ (1+|x|)^3|W(x)| + (1+|x|)^4|\nabla W(x)|\right] \leq C(\kappa)M.
    \]
\end{pro}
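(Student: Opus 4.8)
The plan is to establish \eqref{B.6}(i) first, and then to read off the decay statements \eqref{B.6}(ii)--(iii) from the Duhamel representation that (i) legitimizes.

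\textbf{Part (i): uniqueness and self-similarity.} If $w,\tilde w$ both satisfy the hypotheses, the difference $\bar w:=w-\tilde w$ solves the \emph{homogeneous} Lam\'e system in $Q_+$ with $\|\bar w(\cdot,t)\|_{L_\gamma(\mathbb R^3)}\to 0$ as $t\to 0^+$. Since the forcing is now zero, interior parabolic regularity for $\partial_t-\Delta-\kappa\nabla{\rm div}$ (equivalently, the splitting into two heat equations with diffusivities $1$ and $1+\kappa$ used in the proof of Proposition \ref{Prop1.1}) shows $\bar w$ is smooth on $Q_+$ and satisfies the semigroup identity $\bar w(\cdot,t)=S_\kappa(t-s)\bar w(\cdot,s)$ for $0<s<t$. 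Proposition \ref{Prop1.1}, applied with $s_1=s=\gamma$ (so that $1/l=0$), gives $\|S_\kappa(\tau)\|_{L_\gamma\to L_\gamma}\le c(\gamma)$ for all $\tau\ge0$; hence $\|\bar w(\cdot,t)\|_{L_\gamma}\le c(\gamma)\|\bar w(\cdot,s)\|_{L_\gamma}\to0$ as $s\to0^+$, so $\bar w\equiv0$. For the scaling property, note that for each $\lambda>0$ the function $w_\lambda(x,t):=\lambda\,w(\lambda x,\lambda^2 t)$ solves the same problem \eqref{B.6}: the source $t^{-3/2}f(x/\sqrt t)$ is invariant under the parabolic rescaling up to the prefactor $\lambda$, and $\|w_\lambda(\cdot,t)\|_{L_\gamma}=\lambda^{1-3/\gamma}\|w(\cdot,\lambda^2 t)\|_{L_\gamma}\to0$. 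Uniqueness forces $w_\lambda=w$, so $w$ is self-similar, $w(x,t)=t^{-1/2}W(x/\sqrt t)$ with $W=w(\cdot,1)$.

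\textbf{Duhamel representation and kernel bounds.} Let $\Gamma_\kappa(\cdot,t)$ be the matrix kernel of $S_\kappa(t)$. From $S_\kappa(t)=S(t)\mathbb P+S((1+\kappa)t)\mathbb Q$ (Helmholtz projections $\mathbb P,\mathbb Q$; see the proof of Proposition \ref{Prop1.1}), $\Gamma_\kappa$ is a sum of a Gaussian and Calder\'on--Zygmund transforms of Gaussians, so it is integrable in $x$ with $\|\Gamma_\kappa(\cdot,t)\|_{L_1}\le C(\kappa)$ uniformly in $t$, and obeys the Oseen-type pointwise bounds
$$|\partial_x^\alpha\Gamma_\kappa(x,t)|\le C(\kappa,\alpha)\,(\sqrt t+|x|)^{-3-|\alpha|},\qquad t>0.$$
Put $g(x,s):=s^{-3/2}f(x/\sqrt s)$ and $\tilde w(x,t):=\int_0^t\big(\Gamma_\kappa(\cdot,t-s)*g(\cdot,s)\big)(x)\,ds$. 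Using the smoothing estimates of Proposition \ref{Prop1.1} together with the self-similar scaling of $g(\cdot,s)$ in Lebesgue spaces (choosing the exponents so the $s$-integral converges at the endpoints), one checks that $\tilde w\in L_\infty(0,T;L_\gamma)$ for all $T>0$, that $\|\tilde w(\cdot,t)\|_{L_\gamma}\to0$ as $t\to0^+$, and that $\tilde w$ solves \eqref{B.6}; hence $\tilde w=w$ by part (i). Evaluating at $t=1$ and substituting $y=\sqrt s\,z$ in the spatial convolution yields, for every multi-index $\alpha$,
$$\partial^\alpha W(x)=\int_0^1\!\!\int_{\mathbb R^3}\partial_x^\alpha\Gamma_\kappa\big(x-\sqrt s\,z,\ 1-s\big)\,f(z)\,dz\,ds .$$

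\textbf{Decay estimates and interior regularity.} Inserting $|f(z)|\le M(1+|z|)^{-n}$ ($n=3$ for (ii), $n=4$ for (iii)) and the kernel bounds, one is reduced, for $|\alpha|\le1$, to estimating $\int_0^1\!\int_{\mathbb R^3}(\sqrt{1-s}+|x-\sqrt s z|)^{-3-|\alpha|}(1+|z|)^{-n}\,dz\,ds$; after $\zeta=\sqrt s\,z$ this is $\int_0^1 s^{(n-3)/2}\big(\int_{\mathbb R^3}(\sqrt{1-s}+|x-\zeta|)^{-3-|\alpha|}(\sqrt s+|\zeta|)^{-n}\,d\zeta\big)ds$. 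Splitting the $\zeta$-integral over $\{|\zeta|\le|x|/2\}$, $\{|x-\zeta|\le|x|/2\}$ and the complement, and then integrating in $s$, yields $(1+|x|)^2|W(x)|+(1+|x|)^3|\nabla W(x)|\le C(\kappa)M$ in case (ii) and $(1+|x|)^3|W(x)|+(1+|x|)^4|\nabla W(x)|\le C(\kappa)M$ in case (iii). The interior bound $\|W\|_{C^{1+\alpha}(B(R))}\le c(\alpha,\kappa,R)M$ follows from interior parabolic $L^p$ estimates for the Lam\'e system on the cylinder $B(2R)\times[\tfrac12,\tfrac32]$ (where the forcing is bounded by $cM$ and $w$ is controlled in $L_\gamma$), followed by Sobolev embedding and $W=w(\cdot,1)$.

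\textbf{Main obstacle.} The delicate points are: (a) upgrading the distributional solution of \eqref{B.6} to its mild/Duhamel form under the minimal hypotheses ($f$ merely continuous, $w\in L_\infty L_\gamma$ only), which is exactly where the uniqueness from (i) and the $L_\gamma$-boundedness of $S_\kappa$ from Proposition \ref{Prop1.1} are indispensable; and (b) the kernel analysis for the Lam\'e semigroup — establishing the uniform-in-$t$ Oseen-type bounds and $L_1$-bound on $\Gamma_\kappa$, and then controlling the multi-scale space-time integral above without losing powers of $|x|$, in particular in the regime $s\to0^+$, where $g(\cdot,s)$ concentrates at the origin, and near $s\to1^-$, where $\Gamma_\kappa(\cdot,1-s)$ is singular.
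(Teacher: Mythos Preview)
Your argument is correct, but it follows a genuinely different route from the paper's own proof.

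The paper does \emph{not} use the full matrix Lam\'e kernel with Oseen-type pointwise bounds. Instead it reduces everything to scalar heat equations in two passes. First, applying ${\rm div}$ to \eqref{B.6} gives a scalar heat equation with diffusivity $1+\kappa$ for ${\rm div}\,w$; the Gaussian kernel $\Gamma_\kappa(x,t)=[4\pi(1+\kappa)t]^{-3/2}\exp(-|x|^2/4(1+\kappa)t)$ then yields the pointwise decay of $\nabla{\rm div}\,W$ at infinity, and the scalar elliptic profile equation $-(1+\kappa)\Delta{\rm div}\,W-\tfrac{x}{2}\cdot\nabla{\rm div}\,W-\tfrac{3}{2}{\rm div}\,W={\rm div}\,f$ gives the interior bound. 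Second, the term $\kappa\nabla{\rm div}\,w$ is moved to the right-hand side, so that $w$ now solves the \emph{standard} heat equation $\partial_t w-\Delta w=t^{-3/2}g(x/\sqrt t)$ with $g=f+\kappa\nabla{\rm div}\,W$; the ordinary heat kernel $\Gamma$ then gives the decay of $W$ and $\nabla W$ (the paper in fact obtains $|W(x)|\le CM|x|^{-3}\log|x|$, comfortably stronger than the stated $|x|^{-2}$), and the elliptic profile equation \eqref{B.8} supplies the local $C^{1+\alpha}$ bound.

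Your approach is more direct and conceptually uniform: one Duhamel formula, one kernel estimate. Its cost is that you must establish (or cite) the Oseen-type bound $|\partial_x^\alpha\Gamma_\kappa(x,t)|\le C(\kappa,\alpha)(\sqrt t+|x|)^{-3-|\alpha|}$ for the matrix kernel, which is true (it follows from the identity $S_\kappa(t)=S(t)\mathbb P+S((1+\kappa)t)\mathbb Q$ and the classical Oseen tensor bounds for each piece) but is a step beyond purely Gaussian estimates. The paper's two-step reduction avoids this, using only scalar heat kernels throughout; it is more elementary but less streamlined. Either way the convolution analysis you sketch (splitting the $\zeta$-integral into near/far regions and integrating in $s$) is exactly the mechanism that produces the claimed powers of $(1+|x|)$.
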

\begin{proof}
First, we observe that if a function $w \in L_{\infty}(0,T;L_{\gamma_1}(\mathbb{R}^3) + L_{\gamma_2}(\mathbb{R}^3))$ (for all $T>0$) is such that $\lim_{t\to 0^+}\|w(\cdot,t)\|_{L_{\gamma_1}(\mathbb{R}^3) + L_{\gamma_2}(\mathbb{R}^3)} = 0$ and
$$
\partial_t w - \Delta w - \kappa\nabla {\rm div}\, w = 0\quad \mbox{in }Q_+,
$$
then $w\equiv 0$. 

Second, we start with establishing a decay estimate for $\nabla {\rm div}\, w$. For this, notice that
\begin{equation}\label{B.7}
    {\rm div}\, w(x,t) = \int_0^t \int_{\mathbb{R}^3}\nabla \Gamma_{\kappa}(x-y,t-s)\cdot f(\frac{y}{\sqrt{s}})s^{-\frac{3}{2}}dy ds,
\end{equation}
with
\[
\Gamma_{\kappa}(x,t) = \frac{1}{[4\pi(1+\kappa)t]^{\frac{3}{2}}}\exp\left(-\frac{|x|^2}{4(1+\kappa)t}\right).
\]
A simple computation gives us
\begin{equation*}
    |\nabla{\rm div}\, W(x)| \leq c(\kappa)M\int_0^1\int_{\mathbb{R}^3}\frac{1}{\left( |x-y| + \sqrt{1-s} \right)^4}\frac{1}{\left( |y| + \sqrt{s} \right)^3} dy ds\leq C(\kappa)M|x|^{-3}
\end{equation*}
for $|x|>8$. Since $W$ satisfies the following system
\begin{equation}\label{B.8}
    -\Delta W - \kappa\nabla {\rm div}\, W - \frac{x}{2}\cdot \nabla W -\frac{W}{2} = f\quad \mbox{in }\mathbb{R}^3,
\end{equation}
and that (see for instance \eqref{B.7} combined with known estimates for the volume heat potential)
\[
\|{\rm div}\, W\|_{L_2(\mathbb{R}^3)} + \|\nabla {\rm div}\, W\|_{L_{\frac{3}{2}}(\mathbb{R}^3)} \leq C(\kappa)M,
\]
therefore, elliptic estimates for the equation
$$-(1+\kappa)\Delta {\rm div}\,W-x\cdot\nabla {\rm div}\,W-\frac 32{\rm div}\,W={\rm div}\,f$$
guarantee the estimate
\[
\|\nabla {\rm div}\, W\|_{L_{\infty}(B(12))} \leq C(\kappa)M.
\]
Consequently, we have
$$
\sup_{x\in \mathbb{R}^3}\left[ (1 + |x|)^3|\nabla {\rm div}\, W(x)| \right] \leq C(\kappa)M.
$$

Now, if we set $g(x) := f(x) + \nabla {\rm div}\, W(x)$, then
\[
\partial_t w - \Delta w = t^{-\frac{3}{2}}g(\frac{x}{\sqrt{t}})\quad \mbox{in }Q_+
\]
and thus
\[
w(\cdot,t) = \int_0^t \int_{\mathbb R^3}\Gamma (x-y,t)g(\frac{y}{\sqrt{s}})s^{-\frac{3}{2}} ds.
\]
And as previously, we can show that
$$
|W(x)| \leq c(\kappa)M\int_0^1\int_{\mathbb{R}^3}\frac{1}{\left( |x-y| + \sqrt{1-s} \right)^3}\frac{1}{\left( |y| + \sqrt{s} \right)^3} dy ds\leq $$$$\leq C(\kappa)M|x|^{-3}\log|x|,
$$
and
\[
|\nabla W(x)| \leq c(\kappa)M\int_0^1\int_{\mathbb{R}^3}\frac{1}{\left( |x-y| + \sqrt{1-s} \right)^4}\frac{1}{\left( |y| + \sqrt{s} \right)^3} dy ds\leq C(\kappa)M|x|^{-3},
\]
for $|x|>8$. The continuity estimates in $B(12)$ follow from standard elliptic estimates for \eqref{B.8}.

Third, it is proved by using the exact same ideas as in the previous point; the difference here being that the source term has a faster decay at infinity (which make things easier in this case).
\end{proof}

 Now, we are able to give a proof of the  existence of a scale invariant global weak $L^{3,\infty}-$weak solution to our models \eqref{Toy-Mod1} and \eqref{Toy-Mod2}. The proof is based on Leray-Schauder degree theory applied in a suitable function space in order to establish the existence of a solution to systems \eqref{E1.5} or \eqref{E1.6}.
\begin{proof}[Proof of Theorem \ref{existenceself-similar}]
We introduce the following function space
\begin{equation}\label{E2.1}
    X =\left\{ V\in C^1(\mathbb{R}^3): \sup_{x\in \mathbb{R}^3}\left[ (1+|x|)^2|V(x)| + (1+|x|)^3|\nabla V(x)| \right] < \infty \right\}
\end{equation}
endowed with the natural norm
\begin{equation}\label{E2.2}
    \|V\|_X = \sup_{x\in \mathbb{R}^3}\left[ (1+|x|)^2|V(x)| + (1+|x|)^3|\nabla V(x)| \right];
\end{equation}
the choice of this functional space is motivated by Theorem \ref{ThmB.1} and a need for compactness as we shall see below.

We are going to use the same notations as in the proof of Proposition \ref{Prop1.1}. Because of the scaling symmetry of $u_0$, we get that $u^{(0)}_0$ and $u^{(1)}_0$ are also $(-1)-$homogeneous. Moreover, elliptic estimates guarantee that $u^{(1)}_0,u^{(0)}_0\in C^{\infty}(\partial B)$ and we have
\[
|\partial^{\alpha} u^{(1)}_0(x)| + |\partial^{\alpha} u^{(0)}_0(x)| \leq \frac{C(\alpha,u_0)}{|x|^{1+|\alpha|}}.
\]
Consequently,
\[
|\partial^{\alpha} S_\kappa(1)u_0(x)| \leq |\partial^{\alpha}v^0(x,1)| + |\partial^{\alpha}v^1(x,1)|\leq \frac{C(\alpha,\kappa,u_0)}{(1+|x|)^{1+|\alpha|}}
\]
by the properties of the heat equation.

Next, introduce a parameter $\mu\in [0,1]$.
Let us consider the following problem: find $U$ such that

\begin{equation}\label{E2.3}
    -\Delta U - \kappa \nabla {\rm div}\, U + U\cdot\nabla U + \frac{U}{2}{\rm div}\, U - \frac{x}{2}\cdot \nabla U - \frac{U}{2} = 0\quad\mbox{in }\mathbb{R}^3,
\end{equation}
and $|U(x) - V_\mu| = o(|x|^{-1})$ as $|x|\to \infty$, where $V_\mu(x)=S_\kappa(1)(\mu u_0)(x)$. We will seek $U$ in the form
\begin{equation}
    U = V_{\mu} + W,\quad \mbox{where } W\in X.
\end{equation}
It is clear that $u(x,t) = \frac{1}{\sqrt{t}}U(\frac{x}{\sqrt{t}})$ is a global weak $L^{3,\infty}-$solution to \eqref{Toy-Mod1} with initial data $\mu u_0$ if and only if $U(x)$ satisfies the elliptic system \eqref{E2.3} and $U(x) = V_{\mu} + W$ for some $W\in X$ (by Theorem \eqref{ThmB.1}). Thus, we have reduced the problem to finding $W\in X$ such that
\begin{multline}\label{E2.5}
    -\Delta W - \kappa \nabla {\rm div}\, W - \frac{x}{2}\cdot \nabla W - \frac{W}{2} = - W\cdot \nabla W - V_{\mu}\cdot \nabla W - W\cdot\nabla V_{\mu} - V_{\mu}\cdot \nabla V_{\mu}\\-\frac{W}{2}{\rm div}\, W - \frac{V_{\mu}}{2}{\rm div}\, W - \frac{W}{2}{\rm div}\, V_{\mu} - \frac{V_{\mu}}{2}{\rm div}\, V_{\mu}\quad\mbox{in }\mathbb{R}^3.
\end{multline}
Notice that if we set $w(x,t) := \frac{1}{\sqrt{t}}W(\frac{x}{\sqrt{t}})$, we have that
\begin{equation}\label{E2.6}
    \left\{
    \begin{gathered}
    \partial_t w - \Delta w - \kappa\nabla {\rm div}\, w = t^{-\frac{3}{2}}F(\frac{x}{\sqrt{t}})\quad \mbox{in }Q_+\\
    w|_{t=0} = 0\quad\mbox{in }\mathbb{R}^3,
    \end{gathered}
    \right.
\end{equation}
where
\begin{multline*}
    F = - W\cdot \nabla W - V_{\mu}\cdot \nabla W - W\cdot\nabla V_{\mu} - V_{\mu}\cdot \nabla V_{\mu}\\-\frac{W}{2}{\rm div}\, W - \frac{V_{\mu}}{2}{\rm div}\, W - \frac{W}{2}{\rm div}\, V_{\mu} - \frac{V_{\mu}}{2}{\rm div}\, V_{\mu}
\end{multline*}
has the decay properties as in Proposition \ref{PropB.1} provided $W\in X$. Conversely, for a function $F$ with the decay estimates as in Proposition \ref{PropB.1}, system \eqref{E2.6} is uniquely solvable and we denote the solution profile at time $t=1$ as $\mathcal{G}(F)\in X$, i.e., $\mathcal{G}(F)(x):=w(x,1)$. Obviously, $\mathcal G$ is a linear operator. The latter allows as to reformulate \eqref{E2.5} as follows:
\begin{multline}
    \mbox{Find }W\in X \mbox{ such that } W = \mathcal{G}(W\cdot \nabla W - V_{\mu}\cdot \nabla W - W\cdot\nabla V_{\mu} - V_{\mu}\cdot \nabla V_{\mu}\\-\frac{W}{2}{\rm div}\, W - \frac{V_{\mu}}{2}{\rm div}\, W - \frac{W}{2}{\rm div}\, V_{\mu} - \frac{V_{\mu}}{2}{\rm div}\, V_{\mu})
\end{multline}
Now, let us define an operator $K: X\times [0,1]\to X$ be defined as: $\forall V\in X$, $\mu\in [0,1]$,
\begin{multline}\label{E2.8}
    K(V,\mu) = \mathcal{G}(V_{\mu}\cdot \nabla V_{\mu} + \frac{V_{\mu}}{2}{\rm div}\, V_{\mu}) + \mathcal{G}(V\cdot \nabla V + V_{\mu}\cdot \nabla V + V\cdot\nabla V_{\mu}\\ + \frac{V}{2}{\rm div}\, V + \frac{V_{\mu}}{2}{\rm div}\, V + \frac{V}{2}{\rm div}\, V_{\mu})
\end{multline}
Notice that $\mathcal{G}(V_{\mu}\cdot \nabla V_{\mu} + \frac{V_{\mu}}{2}{\rm div}\, V_{\mu}) = \mu^2 \mathcal{G}(V\cdot \nabla V + \frac{V}{2}{\rm div}\, V)$ has a one-dimensional range (thus is compact). 
In order to see that the second term is compact, let us consider a bounded sequence $V^{(j)}$ in $X$ together with $\mu_j \in [0,1]$ and set
\begin{multline*}
    G^{(j)} = \mathcal{G}(V^{(j)}\cdot \nabla V^{(j)} + V_{\mu_j}\cdot \nabla V^{(j)} + V^{(j)}\cdot\nabla V_{\mu_j} + \frac{V^{(j)}}{2}{\rm div}\, V^{(j)} + \frac{V_{\mu_j}}{2}{\rm div}\, V^{(j)}\\ + \frac{V^{(j)}}{2}{\rm div}\, V_{\mu_j})
\end{multline*}
The arguments of the operator $\mathcal{G}$ in the above formula having a decay $(1+|x|)^{-4}$ or better, which is uniform in $j$. From Proposition \ref{PropB.1}, we find that
\begin{equation*}
    \begin{gathered}
    \sup_j\|G^{(j)}\|_{C^{1+\alpha}(B(R))}<\infty,\quad \forall R>0\\
    \sup_j \sup_{x\in \mathbb{R}^3}\left[ (1 + |x|)^3|G^{(j)}(x)| + (1 + |x|)^4|\nabla G^{(j)}(x)| \right] < \infty,
    \end{gathered}
\end{equation*}
which implies the desired compactness in $X$ by the known arguments. The continuity follows the exact same arguments as for the compactness. Consequently, to solve the problem at hand i.e.
\begin{equation}
    \mbox{Find }W \in X \mbox{ such that } W + K(W,\mu) = 0, \qquad\mu\in [0,1],
\end{equation}
we can apply Leray-Schauder theory (see e.g. \cite{Maw99}). All the required a priori estimates are given by Theorem \ref{ThmB.1} and Proposition \ref{PropB.1}, thus the only thing to be verified is the solvability of the problem for small enough $\mu$. But this can be easily done by an application of the implicit function theorem to our functional; we skip the details here for the sake of brevity. Same reasoning for the model \eqref{Toy-Mod2}. And this concludes the proof.
\end{proof}
\paragraph{Acknowledgement}
The first author is supported by the Engineering and Physical Sciences Research Council [EP/L015811/1]. The second author is supported by the grant RFBR 20-01-00397.
\newpage

\end{document}